\newtheorem{theorem}{Theorem}[section]
\newtheorem{lemma}[theorem]{Lemma}
\newtheorem{proposition}[theorem]{Proposition}
\theoremstyle{definition}
\newtheorem{example}[theorem]{Example}
\newtheorem{remark}[theorem]{Remark}
\newtheorem*{note}{Note}
\newtheorem*{corrections1}{Corrections to [11]}
\newtheorem*{corrections2}{Corrections to M. Igarashi, Duality relations among multiple series with three parameters, Tunisian J. Math. Vol. 2 (2020), no. 1, 217--236}
\begin{document}
\title{On the duality formula for parametrized multiple series}
\author{Masahiro Igarashi}
\date{}
\maketitle
\begin{abstract} 
We show that a duality formula for 
certain parametrized multiple series yields numerous relations among them. 
As a result, we obtain a new relation among extended multiple zeta values, which is an extension of Ohno's relation for multiple zeta values. 
We do the same study also to multiple Hurwitz zeta values, and 
obtain a new identity for them. 
\end{abstract}
\begin{flushleft}
\textbf{Keywords}: Duality; Parametrized multiple series; Extended multiple zeta value; Multiple Hurwitz zeta value
\end{flushleft}
\begin{flushleft}
\textbf{2020 Mathematics Subject Classification}: 11M32, 11M35
\end{flushleft}
\section{Introduction}
Fischler and Rivoal \cite{fr}, Kawashima \cite{kaw} and Ulanskii \cite{ul} studied the following extension of 
the multiple zeta value (MZV for short):
\begin{equation}
\sum_{\begin{subarray}{c}0<m_1<_{c_1}\cdots<_{c_{p-1}}m_p<\infty\\
m_i\in\mathbb{Z}\end{subarray}}
\frac{1}{m_{1}^{k_1}{\cdots}m_{p}^{k_p}},
\end{equation}
where $1{\le}p\in\mathbb{Z}$; $c_i\in\{0,1\}$, $1{\le}k_i\in\mathbb{Z}$ ($i=1,\ldots,p-1$), $2{\le}k_p\in\mathbb{Z}$; the symbols $<_{c_i}$ ($i=1,\ldots,p-1$) denote $<$ if $c_i=1$ and $\le$ if $c_i=0$. 
For $c_i=1$ ($i=1,\ldots,p-1$), this multiple series becomes MZV, 
which was studied in Euler \cite{eu}, Hoffman \cite{ho} and Zagier \cite{z}. 
MZV has rich mathematical contents. 
For example, the product of MZVs has two kinds of multiplication laws, 
one is induced from the series expression of MZV and the other an iterated integral expression, and they yield numerous relations over $\mathbb{Q}$. Furthermore, the set of MZVs generates a $\mathbb{Q}$-algebra with the multiplication laws. 
Understanding the algebra is one of the important problems in mathematics, because MZV has connection with various mathematical objects, e.g., 
knot invariants, Feynman integrals, modular forms and mixed Tate motives over $\mathbb{Z}$. The extension (1) keeps these rich contents. 
Another simple case $c_i=0$ ($i=1,\ldots,p-1$) becomes the multiple zeta-star value (MZSV for short). MZSV is an interesting object itself. Indeed, it is almost the same as MZV in appearance, but aspect of its relations is fairly different: see, e.g., \cite{ikoo}, \cite{ow} and \cite{oz}. 
It can be seen that relations among MZSVs have brevity. 
All other cases of (1) are hybrids of both $<$ and $\le$. 
(They can be expressed in $\mathbb{Z}$-linear combinations of MZVs or MZSVs.) 
These hybrid MZVs frequently appear in the study of MZV and MZSV. 
The most important point of the extension (1) is that it gives a unified expression of the three objects MZV, MZSV and the hybrid MZV, and this gives unified extensions of relations among MZVs and MZSVs: see the case $\alpha=1$ of (7) below. 
Hereafter we call the multiple series (1) the extended multiple zeta value (EMZV for short). 
Kawashima \cite{kaw} introduced EMZV to study a Newton series and relations among MZVs. He proved a duality relation among EMZVs \cite[Proposition 5.3]{kaw}. 
Ulanskii \cite{ul} gave an algebraic formulation for EMZV, and proved some basic properties of EMZV: 
the Chen iterated integral representation, the duality formula, the shuffle and stuffle relations \cite[Corollary 2, Theorems 1, 2 and 3]{ul}. 
Fischler and Rivoal \cite{fr} used EMZV and an extended multiple polylogarithm to study a Pad\'{e} approximation problem involving multiple polylogarithms. They also proved a duality formula for EMZVs, and applied it to construction of $\mathbb{Q}$-linear forms in the Riemann zeta values (see \cite[Section 2]{fr}). We note that multiple series of the extended form (1) naturally appear as derivatives of hypergeometric series (see \cite{i2018}). 
\par 
In the present paper, we also study multiple series of the extended form (1). 
Our interest is duality relations among them. In the former part of the paper, we study the multiple series 
\begin{equation}
\begin{aligned}
\sum_{\begin{subarray}{c}0{\le}m_1<_{c_1}\cdots<_{c_{p-1}}m_p<\infty\\
m_i\in\mathbb{Z}\end{subarray}}
\frac{(\alpha)_{m_1}}{{m_1}!}\frac{{m_p}!}{(\alpha)_{m_p}}
\left\{\prod_{i=1}^{p}\frac{1}{(m_i+\alpha)^{a_i}(m_i+\beta)^{b_i}}\right\},
\end{aligned}
\end{equation}
where $1{\le}p\in\mathbb{Z}$; $a_{i}, b_{i}\in\mathbb{Z}$ such that $a_{i}+b_{i}\ge1$ ($i=1,\ldots,p-1$), $a_{p}+b_{p}\ge2$; 
$\alpha,\beta\in\mathbb{C}$ such that $\mathrm{Re}(\alpha)>0$, $\beta\notin\mathbb{Z}_{\le0}:=\{0,-1,-2,\ldots\}$; $(a)_m$ denotes the Pochhammer symbol, i.e., $(a)_m=a(a+1)\cdots(a+m-1)$ $(1{\le}m\in\mathbb{Z})$ and 
$(a)_0=1$. 
The Pochhammer symbol can be expressed as a quotient of the gamma function: 
$(a)_m=\Gamma(a+m)/\Gamma(a)$. Therefore Stirling's formula for $\Gamma(z)$ can be applied to the estimation of $(a)_m$. 
For the convergence of (2), see \cite[Lemma 2.1]{i2009}. 
The multiple series (2) is an extension of both EMZV and 
our two-parameter multiple series studied in \cite{ig2007} and \cite{i2009}. 
The study of this kind of two-parameter extension of MZV was originated by the author in \cite{ig2007}. 
See also \cite{i2009}, \cite{i2018} and \cite[Note 2]{i2020}. 
The author proved also the cyclic sum formula for 
the case $c_i=1$ ($i=1,\ldots,p-1$) of (2) and for the case $c_i=0$ ($i=1,\ldots,p-1$): see \cite{i202206} and also \cite[Note 2 (iv) and (v)]{i2020}. 
\subsection{Definitions and notation}
To describe our results concisely, we follow Ulanskii's algebraic formulation for EMZV 
\cite{ul}, which is an extension of Hoffman's for MZV \cite{ho2}. 
The formulation is done by using three non-commutative variables, $x_0$, $x_1$ and $x_{-1}$. Hereafter we assume that 
$i,m,n,p,q,r,l_i,m_i,k_i,k^{'}_i,s_i,r_i,y^{(i)}_j,M^{(i)}_{j}\in\mathbb{Z}$. 
For brevity, we frequently use the notation 
$z_{c_{i-1}}(k_i):=x_{c_{i-1}}x_{-1}^{k_i-1}$ 
($c_{i-1}\in\{0,1\}$, $k_i\ge1$). For example, we write $x_{1}x_{-1}^{k_1-1}x_{c_1}x_{-1}^{k_2-1}{\cdots}x_{c_{p-1}}x_{-1}^{k_p-1}$ as
\begin{equation*} 
z_1(k_1)z_{c_1}(k_2){\cdots}z_{c_{p-1}}(k_p)=\prod_{i=1}^{p}z_{c_{i-1}}(k_i), 
\end{equation*}
where $c_0=1$. Here we put
\begin{equation*}
B^0:=\left\{\prod_{i=1}^{p}z_{c_{i-1}}(k_i)\,\Biggl|\,
p\ge0, c_0=1, c_i\in\{0,1\}, k_i\ge1 (i=1,\ldots,p-1), k_p\ge2\right\},
\end{equation*}
where $\prod_{i=1}^{0}z_{c_{i-1}}(k_i)=1\in\mathbb{Q}$, and denote by $V^0$ the $\mathbb{Q}$-vector space whose basis is $B^0$. 
(This vector space corresponds to $Y^0$ in \cite{ul}.) We define 
the evaluation map $Z=Z_{(\alpha,\beta)}: B^0 \rightarrow \mathbb{C}$ by $Z(1;(\alpha,\beta))=1$ and 
\begin{equation}
\begin{aligned}
&Z(z_1(k_1)z_{c_1}(k_2){\cdots}z_{c_{p-1}}(k_p);(\alpha,\beta))\\
=&\sum_{\begin{subarray}{c}0{\le}m_1<_{c_1}\cdots<_{c_{p-1}}m_p<\infty\end{subarray}}
\frac{(\alpha)_{m_1}}{{m_1}!}\frac{{m_p}!}{(\alpha)_{m_p+1}}
\left\{\prod_{i=1}^{p-1}\frac{1}{(m_i+\beta)^{k_i}}\right\}\frac{1}{(m_p+\beta)^{k_p-1}},
\end{aligned}
\end{equation}
where $\alpha,\beta\in\mathbb{C}$ such that $\mathrm{Re}(\alpha)>0$, $\beta\notin\mathbb{Z}_{\le0}$. 
This map can be extended to a $\mathbb{Q}$-linear map onto the whole space $V^0$. 
To describe partial derivatives of (3), we use the evaluation map 
$Z^{*}_{(\{r_i\}_{i=1}^{q})}=Z^{*}_{(\{r_i\}_{i=1}^{q}), (\beta,\alpha)}:B^0 \rightarrow \mathbb{C}$ defined by $Z^{*}_{(\{r_i\}_{i=1}^{q})}(1;(\beta,\alpha))=1$ 
and 
\begin{equation}
\begin{aligned}
&Z^{*}_{(\{r_i\}_{i=1}^{q})}(z_1(k_1)z_{c_1}(k_2){\cdots}z_{c_{q-1}}(k_q);(\beta,\alpha))\\
=
&\sum_{\begin{subarray}{c}0{\le}m_1<_{c_1}M_{1}^{(2)}{\le}\cdots{\le}M_{r_2}^{(2)}<_{1-c_2}m_2\\
\vdots\\
m_{i-1}<_{c_{i-1}}M_{1}^{(i)}{\le}\cdots{\le}M_{r_i}^{(i)}<_{1-c_i}m_i\\
\vdots\\
m_{q-1}<_{c_{q-1}}M_{1}^{(q)}{\le}\cdots{\le}M_{r_q}^{(q)}<_{1-c_q}m_q<\infty
\end{subarray}}
\frac{(\beta)_{m_1}}{{m_1}!}
\frac{{m_q}!(m_q+\alpha)}{(\beta)_{m_q+1}}\\
&\times\frac{1}{(m_1+\beta)^{r_1}(m_1+\alpha)^{k_1}}
\left\{\prod_{i=2}^{q}\left(\prod_{j=1}^{r_i}\frac{1}{M_{j}^{(i)}+\beta}\right)
\frac{1}{(m_i+\alpha)^{k_i}}\right\},
\end{aligned}
\end{equation}
where $q\ge1$, $r_i\ge0$ ($i=1,\ldots,q$), $c_q=1$, $\alpha,\beta\in\mathbb{C}$ such that $\alpha\notin\mathbb{Z}_{\le0}$, $\mathrm{Re}(\beta)>0$. 
This map can also be extended to a $\mathbb{Q}$-linear map onto the whole space $V^0$. If $r_i=0$, we regard the inequalities $m_{i-1}<_{c_{i-1}}M_{1}^{(i)}{\le}\cdots{\le}M_{r_i}^{(i)}<_{1-c_i}m_i$ of (4) as $m_{i-1}<_{c_{i-1}}m_i$. 
For $r_i=0$ ($i=1,\ldots,q$) and for $q=1$, the multiple series (4) becomes
\begin{equation*}
\begin{aligned}
&Z^{*}_{(\{0\}_{i=1}^{q})}\left(\prod_{i=1}^{p}z_{c_{i-1}}(k_i);(\beta,\alpha)\right)
=Z\left(\prod_{i=1}^{p}z_{c_{i-1}}(k_i);(\beta,\alpha)\right),\\
&Z^{*}_{(r_1)}(z_1(k_1);(\beta,\alpha))
=
\sum_{\begin{subarray}{c}0{\le}m_1<\infty
\end{subarray}}
\frac{1}{(m_1+\beta)^{r_1+1}(m_1+\alpha)^{k_1-1}},
\end{aligned}
\end{equation*}
respectively; therefore the map $Z^{*}_{(\{r_i\}_{i=1}^{q})}$ is an extension of the map $Z$ with the additional parameters $\{r_i\}_{i=1}^{q}$. 
This gives an algebraic description of partial derivatives on $\alpha$ of (3): 
see the proof of Theorem 1.1 (i). 
To describe another derivation aspect of our results, 
we need also the maps $\sigma_r^{b,1}$, $\sigma_r:B^0 \rightarrow V^0$ defined by $\sigma_r^{b,1}(1)=\sigma_r(1)=1$ and 
\begin{equation*}
\begin{aligned}
&\sigma_r^{b,1}(z_1(k_1)z_{c_1}(k_2){\cdots}z_{c_{p-1}}(k_p))\\
=
&\sum_{\begin{subarray}{c}r_1+\cdots+r_p=r\\
r_i\ge0\end{subarray}}
\left\{\prod_{i=1}^{p-1}\binom{k_{i}+r_{i}-1}{r_{i}}\right\}
\binom{k_{p}+r_p-2}{r_{p}}
\prod_{i=1}^{p}z_{c_{i-1}}(k_i+r_i),\\
&\sigma_r(z_1(k_1)z_{c_1}(k_2){\cdots}z_{c_{p-1}}(k_p))\\
=
&\sum_{\begin{subarray}{c}\sum_{i=1}^{p-1}c_{i}r_i+r_{p}=r\\
c_{i}r_i, r_{p}\ge0\end{subarray}}
\left\{\prod_{i=1}^{p-1}z_{c_{i-1}}(k_i+c_{i}r_i)\right\}z_{c_{p-1}}(k_p+r_p),
\end{aligned}
\end{equation*}
where $r\ge0$. 
These are variations of the map $\sigma_m$ used in \cite[Section 6]{ikz}. (For $c_i=1$ ($i=1,\ldots,p-1$), the map $\sigma_r$ becomes $\sigma_m$.) In the present paper, we use the following standard definition of the dual: 
Let $\tau$ be the map $\tau:B^0 \rightarrow B^0$ defined by $\tau(1)=1$ and 
$\tau(x_{1}x_{e_1}{\cdots}x_{e_{n-1}}x_{-1})=x_{1}x_{-e_{n-1}}{\cdots}x_{-e_1}x_{-1}$, 
where $n\ge1$ and $e_i\in\{-1,0,1\}$ ($i=1,\ldots,n-1$). 
Then $\tau(v)$ is called the dual of $v$. 
It is obvious that $\tau^2(v)=v$. The maps $\sigma_r^{b,1}$, $\sigma_r$ and $\tau$ can be extended to 
$\mathbb{Q}$-linear maps from the whole space $V^0$ to itself. 
Let $v\in{B^0}$. Then $\tau(v)$ can also be expressed as $\tau(v)=\prod_{i=1}^{q}z_{c^{'}_{i-1}}(k^{'}_i)$, where $q\ge0$, $c^{'}_0=1$, $c^{'}_i\in\{0,1\}$, $k^{'}_{i}\ge1$ ($i=1,\ldots,q-1$), 
$k^{'}_q\ge2$. Hereafter we assume this expression for $\tau(v)$. 
For any fixed real numbers $a$, $b$ ($a<b{\le}\infty$), we regard 
the sum $\sum_{a<_{c_1}M_1<_{c_2}\cdots<_{c_{p}}M_p<_{c_{p+1}}b}A_{M_1,\ldots,M_p}$ as 1 if $p=0$. 
\subsection{Main theorem}
For $v=\prod_{i=1}^{p}z_{1}(k_i)$, we proved in \cite{i2009} a large class of 
relations among the multiple series $Z(v;(\alpha,\alpha))$ by using the duality formula
\begin{equation}
Z(v;(\alpha,\beta))=Z(\tau(v);(\beta,\alpha)), 
\quad \mathrm{Re}(\alpha), \mathrm{Re}(\beta)>0 
\end{equation}
(see \cite[Theorem 1.1 and Lemma 2.3]{i2009}). 
The symmetry on $\alpha$ and $\beta$ of (5) played an essential role for the proof (see \cite[Section 2]{i2009}). 
From this fact, we think that this kind of symmetry 
of parametrized multiple series is useful for the study of relations among multiple series (e.g., MZVs). 
In the present paper, we prove a large class of relations among (2) by using a duality formula for (3), which is an extension of (5) (see Lemma 2.2 below), 
that is, we prove the following:
\begin{theorem}
Let $v\in{B^0}$, and let $\tau(v)$ be its dual. Then 
\par
$(i)$
\begin{equation}
\begin{aligned}
Z(\sigma^{b,1}_{r}(v);(\alpha,\beta))
=
&\sum_{\begin{subarray}{c}c^{'}_{1}r_1+\sum_{i=2}^{q}r_i=r\\
c^{'}_{1}r_1, r_i\ge0\end{subarray}}
Z^{*}_{(c^{'}_1r_1,\{r_i\}_{i=2}^{q})}(\tau(v);(\beta,\alpha))
\end{aligned}
\end{equation}
for all $r\ge0$, $\alpha,\beta\in\mathbb{C}$ with $\mathrm{Re}(\alpha), \mathrm{Re}(\beta)>0$, 
where $c^{'}_1$ and $q$ are those of the dual 
$\tau(v)=\prod_{i=1}^{q}z_{c^{'}_{i-1}}(k^{'}_i)$.
\par
$(ii)$
\begin{equation}
Z(\sigma_{r}(v);\alpha)=Z(\sigma_{r}\tau(v);\alpha)
\end{equation}
for all $r\ge0$, $\alpha\in\mathbb{C}$ with $\mathrm{Re}(\alpha)>0$, where 
$Z(v;\alpha):=Z(v;(\alpha,\alpha))$ $(v\in{B^0}$, $\alpha\in\mathbb{C}$ with 
$\mathrm{Re}(\alpha)>0)$.
\end{theorem}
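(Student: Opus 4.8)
The plan is to derive both identities from the two-parameter duality $Z(v;(\alpha,\beta))=Z(\tau(v);(\beta,\alpha))$ of Lemma~2.2, which I treat as an identity of functions holomorphic in $(\alpha,\beta)$ on $\{\mathrm{Re}(\alpha)>0\}\times\{\mathrm{Re}(\beta)>0\}$. Part $(i)$ I would obtain by differentiating this identity $r$ times in $\beta$, and part $(ii)$ by forming a generating series in an auxiliary variable and then invoking the duality again.

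\medskip\noindent\emph{Part $(i)$.} Apply $\frac{(-1)^{r}}{r!}\,\partial_{\beta}^{\,r}$ to both sides of the Lemma~2.2 identity; term-by-term differentiation is justified by the local uniform convergence of $(3)$ in $\beta$ (cf. \cite[Lemma~2.1]{i2009}). On the left $\beta$ occurs only in the factors $(m_{i}+\beta)^{-k_{i}}$ and the terminal factor $(m_{p}+\beta)^{-(k_{p}-1)}$, so the Leibniz rule together with the elementary identity $\frac{1}{s!}\frac{d^{s}}{d\beta^{s}}(m+\beta)^{-k}=(-1)^{s}\binom{k+s-1}{s}(m+\beta)^{-k-s}$ turns it into $Z(\sigma^{b,1}_{r}(v);(\alpha,\beta))$: the binomial weights, the index condition $r_{1}+\cdots+r_{p}=r$, and the exceptional terminal weight $\binom{k_{p}+r_{p}-2}{r_{p}}$ (from the shifted exponent $k_{p}-1$) match the definition of $\sigma^{b,1}_{r}$ verbatim, and $\mathbb{Q}$-linearity of $Z$ finishes. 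On the right, $\beta$ enters $Z(\tau(v);(\beta,\alpha))$ only through $\frac{(\beta)_{m_{1}}}{(\beta)_{m_{q}+1}}=\prod_{j=m_{1}}^{m_{q}}(\beta+j)^{-1}$; differentiating this product $r$ times and dividing by $r!$ produces $(-1)^{r}$ times a sum over all ways of adding $r$ further unit powers among the integers $j\in[m_{1},m_{q}]$. The main step is then to reorganise that sum along the blocks into which the summation indices of $Z(\tau(v);(\beta,\alpha))$ cut $[m_{1},m_{q}]$: the powers added inside the $i$-th block are recorded as a non-decreasing tuple $M^{(i)}_{1}\le\cdots\le M^{(i)}_{r_{i}}$ (a value repeated with its multiplicity), and those added at the left end-point $m_{1}$ are recorded as $r_{1}$, which transforms the sum into exactly $\sum Z^{*}_{(\varepsilon(c'_{1})r_{1},\{r_{i}\}_{i=2}^{q})}(\tau(v);(\beta,\alpha))$. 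The factor $\varepsilon(c'_{1})$ appears because when $c'_{1}=1/2$ the first non-degenerate summation block of $(4)$ already reaches $m_{1}$ (its left inequality being $m_{1}\le M^{(2)}_{1}$), so the separate count $r_{1}$ must vanish to avoid double counting, whereas when $c'_{1}=1$ the left inequality is $m_{1}<M^{(2)}_{1}$ and the count $r_{1}$ is genuinely needed. Cancelling $(-1)^{r}$ yields $(6)$. I expect the reindexing of this last sum — keeping the strict/non-strict conventions $<^{*}_{c_{i}}$ in $(4)$ consistent, especially when some consecutive summation indices coincide, and matching the multiplicity bookkeeping of the tuples $M^{(i)}_{\bullet}$ with the Leibniz expansion term-for-term — to be the main obstacle.

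\medskip\noindent\emph{Part $(ii)$.} Introduce the generating series $G_{v}(x):=\sum_{r\ge0}Z(\sigma^{\varepsilon}_{r}(v);(\alpha,\beta))\,x^{r}$. Expanding the definition of $\sigma^{\varepsilon}_{r}$ and summing over $r$, the copies of $x_{0}$ inserted at each bumpable position — the terminal one, and each $i<p$ with $c_{i}=1$ — give a geometric series that replaces the factor $(m_{i}+\beta)^{-k_{i}}$ by $(m_{i}+\beta)^{-(k_{i}-1)}(m_{i}+\beta-x)^{-1}$; thus $G_{v}(x)$ is again a parametrized multiple series, of the shape $(2)$ but carrying an extra deformation parameter $\beta-x$ at the bumpable positions. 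I would then apply the duality formula for this wider class (the natural extension of Lemma~2.2 to series of the form $(2)$) and check that, on the diagonal $\alpha=\beta$, it carries $G_{v}(x)$ to $G_{\tau(v)}(x)$ — the involution $\tau$ on words being compatible with the interchange of parameters and sending the bumpable positions of $v$ to those of $\tau(v)$. Comparing the coefficients of $x^{r}$ in the resulting identity $G_{v}(x)=G_{\tau(v)}(x)$ then gives $(7)$. Here the main obstacle is to set $G_{v}(x)$ up so that a duality formula literally applies to it (and to justify the interchange of summations in its definition), and then to verify the compatibility of that duality with the bumpable-position pattern; this is precisely the step at which the hypothesis $\alpha=\beta$ is indispensable, since off the diagonal the deformed series need not be invariant under the relevant involution.
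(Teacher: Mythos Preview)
Your argument for part $(i)$ is correct and essentially the same as the paper's: both differentiate the duality $Z(v;(\alpha,\beta))=Z(\tau(v);(\beta,\alpha))$ of Lemma~2.2 $r$ times in $\beta$, obtaining $Z(\sigma^{b,1}_{r}(v);(\alpha,\beta))$ on the left by the Leibniz rule on the $(m_i+\beta)^{-k_i}$ factors, and on the right organising the $r$-th derivative of $(\beta)_{m_1}/(\beta)_{m_q+1}$ into block contributions matching the definition of $Z^{*}$. The paper makes the block decomposition explicit via the telescoping identity $(\beta)_{m_1}/(\beta)_{m_q+1}=(m_1+\beta)^{-\varepsilon(c'_1)}\prod_{i=2}^{q}(\beta)_{m_{i-1}+\varepsilon(c'_{i-1})}/(\beta)_{m_i+\varepsilon(c'_i)}$ and then differentiates each factor separately; this is exactly the bookkeeping you anticipate as ``the main obstacle''.

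Your approach to part $(ii)$, however, diverges from the paper's and has a genuine gap. The paper does \emph{not} form a generating series or invoke any duality beyond Lemma~2.2 itself. Instead it differentiates the same identity $Z(v;(\alpha,\beta))=Z(\tau(v);(\beta,\alpha))$ $r$ times in $\beta$ \emph{and then sets $\beta=\alpha$}, expanding the two sides by two deliberately different decompositions (one grouping exponents, the other splitting the Pochhammer ratio differently from part $(i)$); this yields an auxiliary identity (Proposition~2.4) that is more complicated than $(7)$. A separate combinatorial step (Proposition~2.5) then shows that Proposition~2.4 is equivalent to $(7)$, by identifying the duals $\tau(v_{\mathbf y})$ with the monomials $v'_{(\{l_i\})}$ and arguing as in \cite[Proof of Proposition~2.7]{i2009}.

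Your generating-series route would need a duality for the deformed series $G_v(x)$, which is not supplied by Lemma~2.2 (that lemma applies only to the specific shape $(3)$, not to series with a mixed $\beta,\beta-x$ pattern). More seriously, your heuristic that $\tau$ ``sends the bumpable positions of $v$ to those of $\tau(v)$'' is false as stated: bumpable positions are $x_0$-blocks (those followed by $x_1$, plus the last), and $\tau$ turns these into $x_1$-blocks. Even the \emph{number} of bumpable positions is not preserved --- e.g.\ $v=z_1(1)z_1(2)$ has two, while $\tau(v)=z_1(3)$ has one --- so no position-level bijection can directly explain $G_v(x)=G_{\tau(v)}(x)$. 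A correct generating-function proof would require a different mechanism (for instance, an integral representation of $G_v(x)$ to which the change of variables $t\mapsto 1-t$ applies), and that mechanism is neither sketched in your proposal nor available in the paper.
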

The identities (6) and (7) yield numerous relations among (2) and EMZVs. 
In fact, even the simple case $v=\prod_{i=1}^{p}z_1(k_i)$ of (7) becomes 
a large class of relations \cite[Theorem 1.1]{i2009}. 
We note that, for $v\in{B^0}$ and its dual $\tau(v)$, the identity (6) gives two different relations among (2): see Example 2.7 (i) below. 
This is one of the features of our multi-parameter extension. 
The specialization $\alpha=\beta=1$ of (6) and of (7) give a large class of relations among EMZVs. In particular, the case $\alpha=1$ of (7) is a new extension of Ohno's relation for MZVs \cite[Theorem 1]{o}, 
which extends Ohno's relation to a relation involving MZSVs and the hybrid MZVs. Besides this, the specialization $v=z_1(k_1)\left\{\prod_{i=2}^{p}z_{0}(k_i)\right\}$ and $\alpha=1$ of (7) gives a relation between MZSVs and the hybrid MZVs: see (26) below. 
\par 
In the present paper, we shall study also a duality formula for multiple Hurwitz zeta values; see Section 3 for details. The result, Theorem 3.1 below, is another main theorem of the paper. 
\par
We explain our idea for the proofs of Theorems 1.1 and 3.1. 
It is to use symmetries on $\alpha$ and $\beta$ of the multiple series (2), (27) and (28) below; see (9) and (32) below. 
These symmetries can be found by making a change of variables to 
an iterated integral representation of (2) and of (27): see the proof of (9) and of (32). 
We note that the change of variables also brings changes of the positions of the parameters of (2), (27) and (28): compare the positions of 
the parameters $\alpha$ and $\beta$ on both sides of (9) and of (32). 
This plays an essential role for deriving various and numerous relations. 
Indeed, the changes of the positions allow us to show that partial differential operators act 
on each side of (9) and of (32) in two different ways, and this gives the relations 
in the theorems. (The above idea was used also in our previous works \cite{ig2007} and \cite{i2009}.) 
Another main tool for the proof is our calculus of the Pochhammer symbol 
$(a)_m$ used in \cite{i2018}, which was developed in its preprints 
distributed in 2013. 
This allows us to calculate derivatives of $(a)_m$ without calculating products of finite multiple harmonic sums: see, e.g., (10)--(12) below and compare them with our calculus used in \cite{i2011}. 
\par
We shall prove Theorem 1.1 in Section 2. Our method of the proof is similar to that in \cite{ig2007} and \cite{i2009}. 
In Section 3, we shall apply our method also to multiple Hurwitz zeta values, and shall obtain an identity for them similar to Theorem 1.1 (i), 
which also yields numerous relations. 
\par 
The present paper is a revised version of preprints of mine which were 
distributed in October 2015. An earlier version of the paper was submitted to a journal in June 2016. See also Note at the end of Section 3 and 
\cite[Note 2 (iii)--(v)]{i2020}. 
\section{Proof of Theorem 1.1}
We first prove a duality formula for (3). 
We define the symbol $\omega_{e_i}(t)$ ($e_i\in\{-1,0,1\}$) by 
\begin{equation*}
\omega_{-1}(t)=\frac{1}{t}, \quad \omega_{0}(t)=\frac{1}{t(1-t)}, \quad \omega_{1}(t)=\frac{1}{1-t}.
\end{equation*}
\begin{lemma}
Let $n\ge1$ and $e_i\in\{-1,0,1\}$ $(i=1,\ldots,n-1)$. Then the following iterated integral representation of $(3)$ holds$:$
\begin{equation}
\begin{aligned}
&Z(x_{1}x_{e_1}{\cdots}x_{e_{n-1}}x_{-1};(\alpha,\beta))\\
=
&\idotsint\displaylimits_{\begin{subarray}{c}
0<t_0<\cdots<t_n<1
\end{subarray}}
(1-t_0)^{1-\alpha}t_0^{\beta-1}\omega_1(t_0)
\left\{\prod_{i=1}^{n-1}\omega_{e_i}(t_i)\right\}
\omega_{-1}(t_n)t_n^{1-\beta}(1-t_n)^{\alpha-1}
\mathrm{d}t_{0}\cdots\mathrm{d}t_{n}
\end{aligned}
\end{equation}
for all $\alpha,\beta\in\mathbb{C}$ with $\mathrm{Re}(\alpha), \mathrm{Re}(\beta)>0$.
\end{lemma}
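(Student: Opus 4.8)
The plan is to expand the left-hand series $Z(x_1 x_{e_1}\cdots x_{e_{n-1}}x_0;(\alpha,\beta))$ from its definition (3) and to recognize each summand as a product of beta-type integrals, then glue these integrals into a single iterated integral over the simplex $0<t_0<\cdots<t_n<1$. The two "boundary" factors, namely the prefactor $\dfrac{(\alpha)_{m_1}}{m_1!}$ and the tail factor $\dfrac{m_p!}{(\alpha)_{m_p+1}}$, are the things that produce the exotic weights $(1-t_0)^{1-\alpha}t_0^{\beta-1}$ and $t_n^{1-\beta}(1-t_n)^{\alpha-1}$ in (8), so I would handle those first by isolating the Euler-type integral identities
\[
\frac{(\alpha)_{m}}{m!}=\frac{\Gamma(\alpha+m)}{\Gamma(\alpha)\,\Gamma(m+1)}
=\frac{1}{B(\alpha,1)}\cdot\frac{1}{?}\,,
\]
more precisely the standard representations $\dfrac{1}{(m+\beta)^{k}}=\dfrac{(-1)^{k-1}}{(k-1)!}\int_0^1 u^{m+\beta-1}(\log u)^{k-1}\,\mathrm{d}u$ for the interior powers $\omega_0$-blocks, together with a beta-integral for the Pochhammer ratios. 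Concretely I expect to use $\dfrac{\Gamma(m_1+\beta)}{\Gamma(m_1+1)}$ and $\dfrac{\Gamma(m_p+1)}{\Gamma(m_p+\alpha+1)}$ rewritten via $B(\cdot,\cdot)$, which is exactly where the factors $(1-t_0)^{1-\alpha}$, $t_0^{\beta-1}$ (at the $t_0$ end) and $t_n^{1-\beta}$, $(1-t_n)^{\alpha-1}$ (at the $t_n$ end) come from.

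The cleanest route is probably the reverse one: start from the right-hand iterated integral, substitute the generating-series expansions $\omega_0(t)=\sum_{m\ge0}?$ — rather, expand $\omega_1(t_0)=\dfrac{1}{1-t_0}=\sum_{m_1\ge0}t_0^{m_1}$ is \emph{not} quite right because of the extra algebraic weights, so instead I would expand the combination $(1-t_0)^{1-\alpha}\,\omega_1(t_0)=(1-t_0)^{-\alpha}=\sum_{m_1\ge 0}\dfrac{(\alpha)_{m_1}}{m_1!}t_0^{m_1}$, which \emph{is} exactly the prefactor in (3); symmetrically $(1-t_n)^{\alpha-1}\omega_0(t_n)=\dfrac{(1-t_n)^{\alpha-1}}{t_n}$ together with the remaining $t_n^{1-\beta}$ and the iterated integration in $t_n$ will reproduce $\dfrac{m_p!}{(\alpha)_{m_p+1}}$ after integrating $t_n$ against $t_n^{m_p+1-\beta-1}(1-t_n)^{\alpha-1}$, i.e. a beta integral $B(m_p+1-\beta+\beta,\alpha)=B(m_p+1,\alpha)$ — wait, one must be careful that the accumulated exponent from the inner integrations is precisely $m_p$, so the $t_n^{1-\beta}$ cancels the $t_n^{-\beta}$ hidden in the lower blocks. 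The interior one-forms $\omega_{e_i}(t_i)$ with $e_i\in\{0,1/2,1\}$ are expanded as geometric-type series: $\omega_0(t)\rightsquigarrow$ contributes a factor $\dfrac1t$ (the "$k_i-1$ extra zeros" giving the power $k_i$ in the denominator via repeated integration $\int_0^{t}\dfrac{\mathrm{d}s}{s}$), $\omega_1(t)=\dfrac{1}{1-t}=\sum_{\ell\ge0}t^\ell$ forces a \emph{strict} jump $<$ in the summation index, and $\omega_{1/2}(t)=\dfrac{1}{t(1-t)}=\dfrac1t+\dfrac{1}{1-t}$ forces a \emph{non-strict} jump $\le$ — this is the standard dictionary $x_1\leftrightarrow<_{1},\ x_{1/2}\leftrightarrow<_{1/2}$, and it is what the notation $z_{c_{i-1}}(k_i)=x_{c_{i-1}}x_0^{k_i-1}$ is engineered to encode.

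So the key steps, in order, are: (1) expand the two boundary weights as $(1-t_0)^{-\alpha}=\sum\frac{(\alpha)_{m_1}}{m_1!}t_0^{m_1}$ and set up the $t_n$ beta-integral that yields $\frac{m_p!}{(\alpha)_{m_p+1}}$; (2) integrate the simplex from the inside out, using $\int_0^{t_{i+1}}\frac{\mathrm{d}t_i}{t_i}(\text{monomial } t_i^{N})=\frac{t_{i+1}^{N}}{N}$ for the $\omega_0$ blocks to build up the powers $(m_i+\beta)^{-k_i}$ (the $\beta$ appearing because each variable $t_i$ carries the accumulated exponent plus the $t^{\beta-1}$ seeded at $t_0$), and $\int_0^{t_{i+1}}\frac{\mathrm{d}t_i}{1-t_i}$ resp. $\frac{\mathrm{d}t_i}{t_i(1-t_i)}$ for the $x_1$ resp. $x_{1/2}$ letters to implement the strict resp. non-strict inequality in the nested sum; (3) collect everything and check that the resulting multiple series is term-by-term equal to (3). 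The main obstacle I anticipate is the bookkeeping of the exponents of $t_0$ (carrying the seed $\beta-1$) as they propagate through the interior integrations: one must verify that after all $\omega_0$-integrations the variable $t_i$ really carries exponent $m_i+\beta-1$ (not $m_i-1$ or $m_i+\beta$), so that the final $t_n$-integral is the beta integral with the right arguments and the lone $t_n^{1-\beta}$ cancels correctly. A secondary subtlety is justifying the interchange of summation and integration (absolute convergence on the simplex, which follows from $\mathrm{Re}(\alpha),\mathrm{Re}(\beta)>0$ and the growth bound cited from \cite[Lemma 2.1]{i2009}), and making sure the $\omega_{1/2}=\frac1t+\frac1{1-t}$ split really produces $\le$ and not a double-counted diagonal. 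Once the exponent bookkeeping is pinned down, the rest is the routine iterated-integral-to-multiple-series translation familiar from the MZV literature.
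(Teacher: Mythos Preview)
Your proposal is correct and follows essentially the same route as the paper's own proof: start from the iterated integral, combine $(1-t_0)^{1-\alpha}\omega_1(t_0)=(1-t_0)^{-\alpha}$ and expand it as $\sum_{m_1\ge0}\frac{(\alpha)_{m_1}}{m_1!}t_0^{m_1}$, expand the interior $\omega_{c_{i-1}}$'s as geometric-type series, integrate the $\omega_0$-blocks to produce the powers $(m_i+\beta)^{-k_i}$, and finish with the beta integral $\int_0^1 t_n^{m_p}(1-t_n)^{\alpha-1}\,\mathrm{d}t_n=\frac{m_p!}{(\alpha)_{m_p+1}}$ at the top. The paper's proof is terser (it just says ``applying the expansions \ldots\ and integrating term by term''), but your exponent bookkeeping and your reading of $\omega_1$ versus $\omega_{1/2}$ as strict versus non-strict jumps are exactly the mechanism behind that one-line computation.
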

\begin{proof}
The proof is the same as that in \cite[Proof of Lemma 2.2]{i2009}. 
The integrand of the iterated integral of (8) can be rewritten as 
\begin{equation*}
\omega_1
\left\{\prod_{i=1}^{n-1}\omega_{e_i}\right\}
\omega_{-1}
=
\prod_{i=1}^{p}
\omega_{c_{i-1}}\omega_{-1}^{k_i-1},
\end{equation*}
where $p\ge1$, $c_0=1$, $c_i\in\{0,1\}$, $k_i\ge1$ ($i=1,\ldots,p-1$), $k_p\ge2$. 
Using this expression, we can rewrite the iterated integral of (8), which we denote by $I$, as 
\begin{equation*}
\begin{aligned}
I
=
&\idotsint\displaylimits_{\begin{subarray}{c}
0<t_{11}<\cdots<t_{1k_1}<\\
\vdots\\
<t_{i1}<\cdots<t_{ik_i}<\\
\vdots\\
<t_{p1}<\cdots<t_{pk_p}<1
\end{subarray}}
(1-t_{11})^{1-\alpha}t_{11}^{\beta-1}
\left\{
\prod_{\begin{subarray}{c}i=1\end{subarray}}^{p}
\omega_{c_{i-1}}(t_{i1})
\left(\prod_{j=2}^{k_i}\omega_{-1}(t_{ij})\right)
\right\}
t_{pk_p}^{1-\beta}(1-t_{pk_p})^{\alpha-1}\\
&\times\left(\prod_{i=1}^{p}\prod_{j=1}^{k_i}\mathrm{d}t_{ij}\right).
\end{aligned}
\end{equation*}
Further, applying the expansions 
\begin{equation*}
\begin{aligned}
\left(1-t_{11}\right)^{-\alpha}
=\sum_{m=0}^{\infty}\frac{(\alpha)_{m}}{m!}t^{m}_{11}, \quad 
\left(1-t_{i1}\right)^{-1}
=\sum_{m=0}^{\infty}t^{m}_{i1}
\end{aligned}
\end{equation*}
($i=2,\ldots,p$) to the integrand and integrating term by term, we have also 
the identities
\begin{equation*}
\begin{aligned}
I
=
&\sum_{\begin{subarray}{c}0{\le}m_1<_{c_1}\cdots<_{c_{p-1}}m_p<\infty\end{subarray}}
\frac{(\alpha)_{m_1}}{{m_1}!}\left\{\prod_{i=1}^{p-1}\frac{1}{(m_i+\beta)^{k_i}}\right\}\frac{1}{(m_p+\beta)^{k_p-1}}\\
&\times\int_{0}^{1}(1-t_{pk_p})^{\alpha-1}{t^{m_p}_{pk_p}}\mathrm{d}t_{pk_p}\\
=
&\sum_{\begin{subarray}{c}0{\le}m_1<_{c_1}\cdots<_{c_{p-1}}m_p<\infty\end{subarray}}
\frac{(\alpha)_{m_1}}{{m_1}!}\left\{\prod_{i=1}^{p-1}\frac{1}{(m_i+\beta)^{k_i}}\right\}\frac{1}{(m_p+\beta)^{k_p-1}}\\
&\times\frac{\Gamma(\alpha)\Gamma(m_p+1)}
{\Gamma(\alpha+m_p+1)}\\
=
&\sum_{\begin{subarray}{c}0{\le}m_1<_{c_1}\cdots<_{c_{p-1}}m_p<\infty\end{subarray}}
\frac{(\alpha)_{m_1}}{{m_1}!}\frac{{m_p}!}{(\alpha)_{m_p+1}}
\left\{\prod_{i=1}^{p-1}\frac{1}{(m_i+\beta)^{k_i}}\right\}\frac{1}{(m_p+\beta)^{k_p-1}}\\
=
&Z(x_1x_{-1}^{k_1-1}x_{c_1}x_{-1}^{k_2-1}{\cdots}x_{c_{p-1}}x_{-1}^{k_p-1};(\alpha,\beta))
\end{aligned}
\end{equation*}
for $\alpha,\beta\in\mathbb{C}$ with $\mathrm{Re}(\alpha), \mathrm{Re}(\beta)>0$. 
The monomial $x_1x_{-1}^{k_1-1}x_{c_1}x_{-1}^{k_2-1}{\cdots}x_{c_{p-1}}x_{-1}^{k_p-1}$ can be rewritten as 
$x_{1}x_{e_1}{\cdots}x_{e_{n-1}}x_{-1}$ ($e_i\in\{-1,0,1\}$); thus we obtain (8).
\end{proof}
Using Lemma 2.1, we can prove the following duality formula 
for (3), which will play an essential role for the proof of Theorem 1.1:
\begin{lemma}[Duality formula]
Let $v\in{B^0}$, and let $\tau(v)$ be its dual. Then 
\begin{equation}
Z(v;(\alpha,\beta))
=
Z(\tau(v);(\beta,\alpha))
\end{equation}
for all $\alpha,\beta\in\mathbb{C}$ with $\mathrm{Re}(\alpha), \mathrm{Re}(\beta)>0$.
\end{lemma}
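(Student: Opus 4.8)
The plan is to derive the duality formula directly from the iterated integral representation of Lemma~2.1 by means of the substitution $t_i \mapsto 1 - t_{n-i}$, which reverses the simplex and exchanges the roles of $\alpha$ and $\beta$. First I would write $v = x_1 x_{e_1}\cdots x_{e_{n-1}} x_0$ with $e_i \in \{0,1/2,1\}$ and apply Lemma~2.1 to express $Z(v;(\alpha,\beta))$ as
\begin{equation*}
\idotsint\displaylimits_{0<t_0<\cdots<t_n<1}
(1-t_0)^{1-\alpha}t_0^{\beta-1}\,\omega_1(t_0)
\left\{\prod_{i=1}^{n-1}\omega_{e_i}(t_i)\right\}
\omega_0(t_n)\,t_n^{1-\beta}(1-t_n)^{\alpha-1}\,
\mathrm{d}t_0\cdots\mathrm{d}t_n.
\end{equation*}
Then I would perform the change of variables $s_i := 1 - t_{n-i}$ for $i=0,\ldots,n$. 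This is an orientation-reversing bijection of the open simplex $\{0<t_0<\cdots<t_n<1\}$ onto $\{0<s_0<\cdots<s_n<1\}$, with $\mathrm{d}t_0\cdots\mathrm{d}t_n = \mathrm{d}s_0\cdots\mathrm{d}s_n$ (the $n+1$ sign changes from the individual differentials cancel against the reversal of the order of integration).

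The key computational step is to track how each factor of the integrand transforms. Under $t \mapsto 1-s$ we have $\omega_0(t) = 1/t = 1/(1-s) = \omega_1(s)$, $\omega_1(t) = 1/(1-t) = 1/s = \omega_0(s)$, and $\omega_{1/2}(t) = 1/(t(1-t)) = 1/((1-s)s) = \omega_{1/2}(s)$; in other words the substitution implements $e \mapsto 1-e$ on the index set $\{0,1/2,1\}$. Simultaneously the boundary weights swap: the factor $(1-t_0)^{1-\alpha}t_0^{\beta-1}$ attached to the smallest variable becomes $s_n^{1-\alpha}(1-s_n)^{\beta-1}$ attached to the largest, and $t_n^{1-\beta}(1-t_n)^{\alpha-1}$ becomes $(1-s_0)^{1-\beta}s_0^{\alpha-1}$ attached to the smallest. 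After also reindexing the middle factors $\prod_{i=1}^{n-1}\omega_{e_i}(t_i) = \prod_{i=1}^{n-1}\omega_{1-e_{n-i}}(s_i)$, the transformed integral is exactly the iterated integral of Lemma~2.1 with $\alpha$ and $\beta$ interchanged and with the word $x_1 x_{1-e_{n-1}}\cdots x_{1-e_1} x_0 = \tau(v)$ in place of $v$. Applying Lemma~2.1 again (now in the reverse direction, reading off the series from the integral) yields $Z(\tau(v);(\beta,\alpha))$, which gives (9). The cases $v=1$ and words of length $1$ are handled by the convention $Z(1;(\alpha,\beta))=1$ and direct inspection.

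The main obstacle, and the one point deserving care, is the bookkeeping around the reversal: one must check that the $\omega_1$ at the left end and the $\omega_0$ at the right end of the word — which are the factors singled out by the definition of $B^0$ and of $\tau$ — map to each other correctly under $e \mapsto 1-e$ (indeed $\omega_1 \leftrightarrow \omega_0$), so that the transformed word is again an admissible element $\tau(v) \in B^0$ and not some malformed expression, and that the exponents $1-\alpha$, $\beta-1$, $1-\beta$, $\alpha-1$ land on the correct endpoints after the simplex is flipped. Once this is verified, everything else is the routine substitution recorded above, and the convergence for $\mathrm{Re}(\alpha),\mathrm{Re}(\beta)>0$ is inherited from Lemma~2.1. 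Since the argument is symmetric in $(\alpha,\beta)$, applying it to $\tau(v)$ and using $\tau^2(v)=v$ recovers the identity in the other direction, confirming consistency.
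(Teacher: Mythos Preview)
Your proposal is correct and follows exactly the paper's approach: the paper's proof consists of the single sentence that applying the change of variables $t_i=1-u_{n-i}$ ($i=0,1,\ldots,n$) to the iterated integral in Lemma~2.1 yields (9). You have simply spelled out the bookkeeping that the paper leaves implicit.
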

\begin{proof} 
Making the change of variables $t_i=1-u_{n-i}$ ($i=0,1,\ldots,n$; see \cite[p.~510]{z}) to 
the iterated integral on the right-hand side of (8), we obtain (9).
\end{proof}
\begin{remark}
Ulanskii \cite{ul} proved the Chen iterated integral representation and the duality formula for EMZVs \cite[Corollary 2 and Theorem 1]{ul}. The case $\alpha=\beta=1$ of (8) and of (9) are Corollary 2 and Theorem 1 of \cite{ul}, respectively. We note also that (8) and (9) are extensions of our previous results \cite[Lemmas 2.2 and 2.3]{i2009}. 
\end{remark}
\subsection{Proof of Theorem 1.1 (i)}
Let $v=\prod_{i=1}^{p}z_{c_{i-1}}(k_i)\in{B^0}$, 
and let $\tau(v)=\prod_{i=1}^{q}z_{c^{'}_{i-1}}(k^{'}_i)$ be its dual. 
The proof is done by differentiating both sides of the duality formula (9) $r$ times with respect to $\beta$. 
The left-hand side of (6) is an immediate result of differentiating that of (9). To obtain the right-hand side, we calculate the derivative 
$\frac{(-1)^r}{r!}\frac{\mathrm{d}^r}{\mathrm{d}\beta^r}\left(\frac{(\beta)_{m_1}}{(\beta)_{m_q+1}}\right)$ ($r\ge0$). From the definition of $c_i$ ($=0,1$), we have $(\beta)_{m_1+c^{'}_1}=(\beta)_{m_1}(m_1+\beta)^{c^{'}_1}$. Using this, we have the expression 
\begin{equation}
\frac{(\beta)_{m_1}}{(\beta)_{m_q+1}}
=
\frac{1}{(m_1+\beta)^{c^{'}_1}}
\left(\prod_{i=2}^{q}\frac{(\beta)_{m_{i-1}+c^{'}_{i-1}}}{(\beta)_{m_{i}+c^{'}_i}}\right),
\end{equation}
where $m_1,\ldots,m_q\in\mathbb{Z}$ such that $0{\le}m_1{<_{c^{'}_1}}\cdots{<_{c^{'}_{q-1}}}m_q$ and $c^{'}_q=1$. The derivatives of the factors on the right-hand side of (10) can be calculated as follows: 
\begin{equation}
\begin{aligned}
&\frac{(-1)^{r_i}}{{r_i}!}\frac{\mathrm{d}^{r_i}}{\mathrm{d}\beta^{r_i}}
\left(\frac{(\beta)_{m_{i-1}+c^{'}_{i-1}}}{(\beta)_{m_{i}+c^{'}_{i}}}\right)\\
=
&\frac{(\beta)_{m_{i-1}+c^{'}_{i-1}}}{(\beta)_{m_{i}+c^{'}_{i}}}
\sum_{\begin{subarray}{c}m_{i-1}+c^{'}_{i-1}{\le}M_1^{(i)}{\le}\cdots{\le}M_{r_i}^{(i)}<m_{i}+c^{'}_{i}\end{subarray}}
\prod_{j=1}^{r_i}\frac{1}{M^{(i)}_{j}+\beta}\\
=
&\frac{(\beta)_{m_{i-1}+c^{'}_{i-1}}}{(\beta)_{m_{i}+c^{'}_{i}}}
\sum_{\begin{subarray}{c}m_{i-1}{<_{c^{'}_{i-1}}}M_1^{(i)}{\le}\cdots{\le}M_{r_i}^{(i)}{<_{1-c^{'}_i}}m_{i}\end{subarray}}
\prod_{j=1}^{r_i}\frac{1}{M^{(i)}_{j}+\beta}
\end{aligned}
\end{equation}
for $r_i\ge0$ ($i=2,\ldots,q$). (From the definitions of the symbols $c_{i}$ 
and $<_{c_i}$, the inequalities $m_{i-1}+c^{'}_{i-1}{\le}M_1^{(i)}$ and $M_{r_i}^{(i)}<m_{i}+c^{'}_{i}$ of (11) can be rewritten as 
$m_{i-1}{<_{c^{'}_{i-1}}}M_1^{(i)}$ and 
$M_{r_i}^{(i)}{<_{1-c^{'}_i}}m_{i}$, respectively.) Using (10) and (11), 
we have 
\begin{equation}
\begin{aligned}
&\frac{(-1)^r}{r!}\frac{\mathrm{d}^r}{\mathrm{d}\beta^r}\left(\frac{(\beta)_{m_1}}{(\beta)_{m_q+1}}\right)\\
=
&\sum_{\begin{subarray}{c}r_1+\cdots+r_q=r\\
r_i\ge0\end{subarray}}
\frac{\binom{r_1+c^{'}_1-1}{r_1}}{(m_1+\beta)^{r_1+c^{'}_{1}}}\\
&\times\left(\prod_{i=2}^{q}\frac{(\beta)_{m_{i-1}+c^{'}_{i-1}}}{(\beta)_{m_{i}+c^{'}_{i}}}
\sum_{\begin{subarray}{c}m_{i-1}{<_{c^{'}_{i-1}}}M_1^{(i)}{\le}\cdots{\le}M_{r_i}^{(i)}{<_{1-c^{'}_i}}m_{i}\end{subarray}}
\prod_{j=1}^{r_i}\frac{1}{M^{(i)}_{j}+\beta}\right)\\
=
&\sum_{\begin{subarray}{c}r_1+\cdots+r_q=r\\
r_i\ge0\end{subarray}}
\frac{\binom{r_1+c^{'}_{1}-1}{r_1}}{(m_1+\beta)^{r_1}}\\
&\times\frac{(\beta)_{m_1}}{(\beta)_{m_q+1}}
\prod_{i=2}^{q}\left(
\sum_{\begin{subarray}{c}m_{i-1}{<_{c^{'}_{i-1}}}M_1^{(i)}{\le}\cdots{\le}M_{r_i}^{(i)}{<_{1-c^{'}_i}}m_{i}\end{subarray}}
\prod_{j=1}^{r_i}\frac{1}{M^{(i)}_{j}+\beta}\right)\\
=
&\sum_{\begin{subarray}{c}c^{'}_{1}r_1+\sum_{i=2}^{q}r_i=r\\
c^{'}_{1}r_1, r_i\ge0\end{subarray}}
\frac{1}{(m_1+\beta)^{c^{'}_{1}r_1}}\\
&\times\frac{(\beta)_{m_1}}{(\beta)_{m_q+1}}
\prod_{i=2}^{q}\left(
\sum_{\begin{subarray}{c}m_{i-1}{<_{c^{'}_{i-1}}}M_1^{(i)}{\le}\cdots{\le}M_{r_i}^{(i)}{<_{1-c^{'}_i}}m_{i}\end{subarray}}
\prod_{j=1}^{r_i}\frac{1}{M^{(i)}_{j}+\beta}\right)
\end{aligned}
\end{equation}
for $r\ge0$: the last identity of (12) follows from the identity
\begin{equation}
\binom{r_i+c^{'}_{i}-1}{r_i} = \left\{\begin{alignedat}{3}
                    &1&\quad&\text{if $c^{'}_{i}=r_i=0$ or $c^{'}_{i}=1$, $r_i\ge0$},\\
                    &0&\quad&\text{if $c^{'}_{i}=0$, $r_i\ge1$}.
                     \end{alignedat}\right.
\end{equation}
Therefore, using (12), we have 
\begin{equation}
\begin{aligned}
&\frac{(-1)^r}{r!}\frac{\partial^r}{\partial\beta^r}\left(\frac{(\beta)_{m_1}}{m_1!}\frac{m_q!}{(\beta)_{m_q+1}}
\left\{\prod_{i=1}^{q-1}\frac{1}{(m_i+\alpha)^{k^{'}_i}}\right\}\frac{1}{(m_q+\alpha)^{k^{'}_q-1}}\right)\\
=
&\sum_{\begin{subarray}{c}c^{'}_{1}r_1+\sum_{i=2}^{q}r_i=r\\
c^{'}_{1}r_1, r_i\ge0\end{subarray}}
\sum_{\begin{subarray}{c}m_1{<_{c^{'}_1}}M_{1}^{(2)}{\le}\cdots{\le}M_{r_2}^{(2)}{<_{1-c^{'}_2}}m_2\\
\vdots\\
m_{q-1}{<_{c^{'}_{q-1}}}M_{1}^{(q)}{\le}\cdots{\le}M_{r_q}^{(q)}{<_{1-c^{'}_q}}m_q
\end{subarray}}
\frac{(\beta)_{m_1}}{{m_1}!}
\frac{{m_q}!(m_q+\alpha)}{(\beta)_{m_q+1}}\\
&\times\frac{1}{(m_1+\beta)^{c^{'}_{1}r_1}(m_1+\alpha)^{k^{'}_1}}
\left\{\prod_{i=2}^{q}\left(\prod_{j=1}^{r_i}\frac{1}{M_{j}^{(i)}+\beta}\right)
\frac{1}{(m_i+\alpha)^{k^{'}_i}}\right\}
\end{aligned}
\end{equation}
for $r\ge0$, $m_1,\ldots,m_q\in\mathbb{Z}$ such that $0{\le}m_1{<_{c^{'}_1}}\cdots{<_{c^{'}_{q-1}}}m_q$ 
and $c^{'}_q=1$. Differentiating the right-hand side of (9) $r$ times with respect to $\beta$ 
and using (14), we obtain the right-hand side of (6). This completes the proof of Theorem 1.1 (i). 
\subsection{Proof of Theorem 1.1 (ii) and a related theorem}
For brevity, we put $\mathbf{y}_{m}^{(i)}:=\sum_{j=1}^{m}y^{(i)}_{j}$ 
($i,m\ge1$, $y^{(i)}_{j}\ge0$): we regard $\mathbf{y}_{0}^{(i)}$ as $0$. 
For any $v=\prod_{i=1}^{p}z_{c_{i-1}}(k_i)\in{B^0}$ and its dual 
$\tau(v)=\prod_{i=1}^{q}z_{c^{'}_{i-1}}(k^{'}_i)$, we define the following two monomials: 
\begin{equation}
v_{\mathbf{y}}
:=\left\{\prod_{i=1}^{p-1}z_{c_{i-1}}(k_i+\mathbf{y}_{k_i-c_{i}}^{(i)})\right\}
z_{c_{p-1}}(k_p+\mathbf{y}_{k_p-2}^{(p)}),
\end{equation}
\begin{equation}
\begin{aligned}
v^{'}_{(\{l_i\}_{i=1}^{q-1})}
&:=x_{1}x_{-1}^{k^{'}_1-1}\left\{\prod_{i=2}^{q}x_{c^{'}_{i-1}}x_{1}^{l_{i-1}}x_{-1}^{k^{'}_{i}-1}\right\}\\
&=z_{1}(k^{'}_1)\left\{\prod_{i=2}^{q}z_{c^{'}_{i-1}}(1)z_{1}(1)^{l_{i-1}-1}z_{1}(k^{'}_{i})\right\},
\end{aligned}
\end{equation}
where $\mathbf{y}:=(\{\mathbf{y}_{k_i-c_{i}}^{(i)}\}_{i=1}^{p-1}, \mathbf{y}_{k_p-2}^{(p)})$ and $l_i\ge0$ ($i=1,\ldots,q-1$). 
If $l_{i-1}=0$ in (16), the factor becomes $x_{c^{'}_{i-1}}x_{1}^{l_{i-1}}x_{-1}^{k^{'}_{i}-1}=x_{c^{'}_{i-1}}x_{-1}^{k^{'}_{i}-1}
=z_{c^{'}_{i-1}}(k^{'}_{i})$; therefore we regard $z_{c^{'}_{i-1}}(1)z_{1}(1)^{-1}z_{1}(k^{'}_{i})$ as $z_{c^{'}_{i-1}}(k^{'}_{i})$. 
\par 
Theorem 1.1 (ii) can be proved in the same way as in \cite[Section 2]{i2009}. 
We first prove the following identity:
\begin{proposition}
Let $v\in{B^0}$. Then 
\begin{equation}
\begin{aligned}
&
\sum_{l=0}^{r}
\sum_{\begin{subarray}{c}
\sum_{i=1}^{p-1}\mathbf{y}_{k_i-c_{i}}^{(i)}+\mathbf{y}^{(p)}_{k_p-2}=l\\
y_{j}^{(i)}\ge0\end{subarray}}
Z(\sigma_{r-l}(v_{\mathbf{y}});\alpha)\\
=
&\sum_{l=0}^{r}\sum_{\begin{subarray}{c}l_1+\cdots+l_{q-1}=l\\
l_i\ge0\end{subarray}}
Z(\sigma_{r-l}(v^{'}_{(\{l_i\}_{i=1}^{q-1})});\alpha)
\end{aligned}
\end{equation}
for all $r\ge0$, $\alpha\in\mathbb{C}$ with $\mathrm{Re}(\alpha)>0$.
\end{proposition}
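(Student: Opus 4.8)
The identity (17) will be deduced from equation (6) of Theorem 1.1(i) — itself the $r$-th $\beta$-derivative of the duality formula $Z(v;(\alpha,\beta))=Z(\tau(v);(\beta,\alpha))$ of Lemma 2.2 — by putting $\beta=\alpha$ and rewriting its two sides. Setting $\beta=\alpha$ in (6) gives
\begin{equation*}
Z(\sigma^{b,1}_{r}(v);\alpha)=\sum_{\begin{subarray}{c}\varepsilon(c'_{1})r_{1}+\sum_{i=2}^{q}r_{i}=r\\ \varepsilon(c'_{1})r_{1},\,r_{i}\ge0\end{subarray}}Z^{*}_{(\varepsilon(c'_{1})r_{1},\{r_{i}\}_{i=2}^{q})}(\tau(v);(\alpha,\alpha)),
\end{equation*}
so it is enough to show that the left-hand side of (17) equals $Z(\sigma^{b,1}_{r}(v);\alpha)$ and that the right-hand side of (17) equals the sum on the right above.

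\emph{The left-hand side.} Here I would first prove the purely algebraic identity in $V^{0}$
\begin{equation*}
\sigma^{b,1}_{r}(v)=\sum_{l=0}^{r}\ \sum_{\begin{subarray}{c}\sum_{i=1}^{p-1}\mathbf{y}^{(i)}_{k_{i}-\varepsilon(c_{i})}+\mathbf{y}^{(p)}_{k_{p}-2}=l\\ y^{(i)}_{j}\ge0\end{subarray}}\sigma^{\varepsilon}_{r-l}(v_{\mathbf{y}})
\end{equation*}
and then apply the $\mathbb{Q}$-linear map $Z(\,\cdot\,;\alpha)$, which turns the right side into the left-hand side of (17). The algebraic identity I would check slot by slot: on the left, the monomial obtained from $v$ by raising the $i$-th exponent by $s_{i}$ (with $\sum_{i}s_{i}=r$) has coefficient $\bigl\{\prod_{i=1}^{p-1}\binom{k_{i}+s_{i}-1}{s_{i}}\bigr\}\binom{k_{p}+s_{p}-2}{s_{p}}$; on the right the same raise is realized by a $v_{\mathbf{y}}$-increment $a_{i}$ — weighted by the number $\binom{(k_{i}-\varepsilon(c_{i}))+a_{i}-1}{a_{i}}$ of compositions of $a_{i}$ into $k_{i}-\varepsilon(c_{i})$ nonnegative parts (into $k_{p}-2$ parts when $i=p$) — followed by a $\sigma^{\varepsilon}$-increment $b_{i}$ of weight $1$, ranging over all $b_{i}\ge0$ when $i=p$ or $c_{i}=1$ and forced to $b_{i}=0$ otherwise, subject to $a_{i}+b_{i}=s_{i}$. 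For a non-bumpable slot there is nothing to convolve, and for a bumpable one the coefficients agree by the hockey-stick identity $\sum_{a=0}^{s}\binom{N-1+a}{a}=\binom{N+s}{s}$ (with $N=k_{i}-1$, respectively $N=k_{p}-2$).

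\emph{The right-hand side.} It remains to prove
\begin{equation*}
\sum_{\begin{subarray}{c}\varepsilon(c'_{1})r_{1}+\sum_{i=2}^{q}r_{i}=r\\ \varepsilon(c'_{1})r_{1},\,r_{i}\ge0\end{subarray}}Z^{*}_{(\varepsilon(c'_{1})r_{1},\{r_{i}\}_{i=2}^{q})}(\tau(v);(\alpha,\alpha))=\sum_{l=0}^{r}\sum_{\begin{subarray}{c}l_{1}+\cdots+l_{q-1}=l\\ l_{i}\ge0\end{subarray}}Z(\sigma^{\varepsilon}_{r-l}(v'_{(\{l_{i}\})});\alpha).
\end{equation*}
For this I would first use $m_{q}!(m_{q}+\alpha)/(\alpha)_{m_{q}+1}=m_{q}!/(\alpha)_{m_{q}}$ to absorb the boundary factor of (4) into the last exponent, so that at $\beta=\alpha$ each summand on the left becomes the evaluation $Z(w;\alpha)$ of a single $w\in B^{0}$: namely $\tau(v)$ with its leading exponent raised according to the first $Z^{*}$-index and, for every $i$ with $r_{i}\ge1$, a block $z_{c'_{i-1}}(1)\,z_{1/2}(1)^{r_{i}-1}$ inserted before the $i$-th letter-group (the heads of the new letters, and of that letter-group, being read off from the inequalities $<_{c'_{i-1}}$, $\le$, $<^{*}_{c'_{i}}$ of (4)). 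On the right, $\sigma^{\varepsilon}_{r-l}(v'_{(\{l_{i}\})})$ expands into monomials obtained from $v'_{(\{l_{i}\})}$ — i.e. from $\tau(v)$ with $l_{i-1}$ strict letters $x_{1}$ inserted before its $i$-th letter-group — by raising the exponents of its strict slots and of its last slot. The two families are then matched via the stuffle-type relation $Z(u\,z_{c}(\ell)\,z_{1/2}(j)\,u';\alpha)=Z(u\,z_{c}(\ell)\,z_{1}(j)\,u';\alpha)+Z(u\,z_{c}(\ell+j)\,u';\alpha)$ for monomials $u,u'$ (the two terms on the right being the $<$-part and the $=$-part of the nonstrict inequality carried by the head $x_{1/2}$ of $z_{1/2}(j)$): unwinding each block $z_{1/2}(1)^{r_{i}-1}$, together with the head $x_{1/2}$ of the $i$-th letter-group when $c'_{i}=1$, produces exactly the inserted $z_{1}(1)$'s along with the $\sigma^{\varepsilon}$-increments of the right-hand side, and checking that the multiplicities coincide reduces once more to hockey-stick summation, now weighing $r_{i}$ against the pair consisting of the number $l_{i-1}$ of inserted letters and the $\sigma^{\varepsilon}$-increments.

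This last step — the comparison of the two expansions, with its bookkeeping around the starred inequalities $<^{*}_{c'_{i}}$ and the conversion of $\le$'s into $<$'s — is the main obstacle, and is the one I would model on the corresponding argument in \cite[Section 2]{i2009}; everything preceding it is term-by-term differentiation (legitimate by the local uniform convergence of the series (3) and (4), cf.\ \cite[Lemma 2.1]{i2009}, and already carried out for Theorem 1.1(i)) together with elementary binomial identities.
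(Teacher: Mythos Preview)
Your left-hand side argument is essentially the paper's: the algebraic identity $\sigma^{b,1}_{r}(v)=\sum_{l}\sum_{\mathbf{y}}\sigma^{\varepsilon}_{r-l}(v_{\mathbf{y}})$ that you prove in $V^{0}$ is exactly what the paper's equation (18) records at the level of series (the hockey-stick splitting of $\binom{k_i+r_i-1}{r_i}$ into a $\mathbf{y}$-part and an $\varepsilon$-part).

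On the right-hand side, however, your route differs from the paper's. You pass through Theorem~1.1(i), land on a sum of $Z^{*}$-values at $\beta=\alpha$, and then propose to unwind the nonstrict inequalities (including the starred endpoints $<^{*}_{c'_{i}}$) by iterated stuffle. The paper does \emph{not} go through (6). Instead it differentiates the right side of the duality (9) afresh, but using a different factorization of the Pochhammer ratio,
\[
\frac{(\beta)_{m_1}}{(\beta)_{m_q+1}}
=\left(\prod_{i=1}^{q-1}\frac{(\beta)_{m_i+\varepsilon(c'_{i})}}{(\beta)_{m_{i+1}}}\right)
\left(\prod_{i=1}^{q-1}\frac{1}{(m_i+\beta)^{\varepsilon(c'_{i})}}\right)\frac{1}{m_q+\beta}
\]
(equation (20)), rather than the factorization (10) underlying (6). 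With this choice, differentiating each factor produces $M$-ranges of the form $m_i<_{c'_i}M_1\le\cdots\le M_{s_i}<m_{i+1}$, already with a \emph{strict} upper bound; the awkward $<^{*}_{c'_i}$ never appears. The separated factors $(m_i+\beta)^{-\varepsilon(c'_i)}$ and $(m_q+\beta)^{-1}$ supply exactly the $\sigma^{\varepsilon}$-increments, and the only remaining $\le$'s---between consecutive $M$'s---are converted to strict chains with exponents $y_j^{(i)}+1$ in one line (the last equality of (21)). The outcome (22) is then read off directly as the summand of the right side of (17).

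So what you call ``the main obstacle'' is precisely what the paper's decomposition (20) was designed to avoid. Your approach can be pushed through, but the stuffle bookkeeping with the $<^{*}$-endpoints and the head changes on the $i$-th letter-group is genuinely more intricate than the paper's computation; the paper trades the reuse of (6) for a second, cleaner differentiation of (9).
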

\begin{proof}
The proof is similar to that of \cite[Lemma 2.5]{i2009}. 
Let $v=\prod_{i=1}^{p}z_{c_{i-1}}(k_i)\in{B^0}$, and let 
$\tau(v)=\prod_{i=1}^{q}z_{c^{'}_{i-1}}(k^{'}_i)$ be its dual. 
 Here we note that the identities 
\begin{equation}
\begin{aligned}
&\frac{(-1)^r}{r!}\frac{\mathrm{d}^r}{\mathrm{d}\beta^r}
\left(\left\{\prod_{i=1}^{p-1}\frac{1}{(m_i+\beta)^{k_i}}\right\}\frac{1}{(m_p+\beta)^{k_p-1}}\right)\\
=
&
\sum_{\begin{subarray}{c}r_1+\cdots+r_p=r\\
r_i\ge0\end{subarray}}
\left(
\prod_{i=1}^{p-1}\frac{\binom{k_i+r_i-1}{r_i}}{(m_i+\beta)^{k_i+r_i}}\right)\frac{\binom{k_p+r_p-2}{r_p}}{(m_p+\beta)^{k_p+r_p-1}}\\
=
&
\sum_{l=0}^{r}
\sum_{\begin{subarray}{c}
\sum_{i=1}^{p-1}\mathbf{y}_{k_i-c_{i}}^{(i)}\\+\mathbf{y}^{(p)}_{k_p-2}=l\\
y_{j}^{(i)}\ge0\end{subarray}}
\sum_{\begin{subarray}{c}\sum_{i=1}^{p-1}c_{i}r_i\\+r_{p}=r-l\\
c_{i}r_i, r_p\ge0\end{subarray}}
\left(\prod_{i=1}^{p-1}\frac{1}{(m_i+\beta)^{k_i+\mathbf{y}_{k_i-c_{i}}^{(i)}+c_{i}r_i}}\right)\\
&\times\frac{1}{(m_p+\beta)^{k_p+\mathbf{y}^{(p)}_{k_p-2}+r_p-1}}
\end{aligned}
\end{equation}
hold for $r\ge0$ and $m_1,\ldots,m_p\in\mathbb{Z}$ such that 
$0{\le}m_1{<_{c_1}}\cdots{<_{c_{p-1}}}m_p$. Therefore, using (18), we have 
\begin{equation}
\begin{aligned}
&\frac{(-1)^r}{r!}\frac{\partial^r}{\partial\beta^r}
\left(\frac{(\alpha)_{m_1}}{m_1!}\frac{m_p!}{(\alpha)_{m_p+1}}\left\{\prod_{i=1}^{p-1}\frac{1}{(m_i+\beta)^{k_i}}\right\}\frac{1}{(m_p+\beta)^{k_p-1}}\right){\Biggl|}_{\beta=\alpha}\\
=
&\sum_{l=0}^{r}
\sum_{\begin{subarray}{c}
\sum_{i=1}^{p-1}\mathbf{y}_{k_i-c_{i}}^{(i)}\\+\mathbf{y}^{(p)}_{k_p-2}=l\\
y_{j}^{(i)}\ge0\end{subarray}}
\sum_{\begin{subarray}{c}\sum_{i=1}^{p-1}c_{i}r_i\\+r_{p}=r-l\\
c_{i}r_i, r_p\ge0\end{subarray}}
\frac{(\alpha)_{m_1}}{m_1!}\frac{m_p!}{(\alpha)_{m_p}}
\left(\prod_{i=1}^{p-1}\frac{1}{(m_i+\alpha)^{k_i+\mathbf{y}_{k_i-c_{i}}^{(i)}+c_{i}r_i}}\right)\\
&\times\frac{1}{(m_p+\alpha)^{k_p+\mathbf{y}^{(p)}_{k_p-2}+r_p}}
\end{aligned}
\end{equation}
for $r\ge0$ and $m_1,\ldots,m_p\in\mathbb{Z}$ such that 
$0{\le}m_1{<_{c_1}}\cdots{<_{c_{p-1}}}m_p$. Differentiating the left-hand side of (9) $r$ times with respect to $\beta$ at $\beta=\alpha$ ($\alpha\in\mathbb{C}$ with $\mathrm{Re}(\alpha)>0$) and using (19), we obtain the left-hand 
side of (17). To obtain the right-hand side of (17), we use the same calculus as 
in the proof of Theorem 1.1 (i). 
From the definition of $c_i$ ($=0,1$), we have $(\beta)_{m_i+c^{'}_{i}}=(\beta)_{m_i}(m_i+\beta)^{c^{'}_{i}}$. Using this, we have the expression 
\begin{equation}
\frac{(\beta)_{m_1}}{(\beta)_{m_q+1}}
=
\left(\prod_{i=1}^{q-1}\frac{(\beta)_{m_i+c^{'}_{i}}}{(\beta)_{m_{i+1}}}\right)
\left(\prod_{i=1}^{q-1}\frac{1}{(m_i+\beta)^{c^{'}_{i}}}\right)
\frac{1}{m_q+\beta},
\end{equation}
where $m_1,\ldots,m_q\in\mathbb{Z}$ such that $0{\le}m_1{<_{c^{'}_1}}\cdots{<_{c^{'}_{q-1}}}m_q$. The derivatives of the factors on the 
right-hand side of (20) can be calculated as follows: 
\begin{equation}
\begin{aligned}
&\frac{(-1)^{s_i}}{{s_i}!}\frac{\mathrm{d}^{s_i}}{\mathrm{d}\beta^{s_i}}
\left(\frac{(\beta)_{m_i+c^{'}_{i}}}{(\beta)_{m_{i+1}}}\right)\\
=
&\frac{(\beta)_{m_i+c^{'}_{i}}}{(\beta)_{m_{i+1}}}
\sum_{\begin{subarray}{c}m_i+c^{'}_{i}{\le}M_1^{(i)}{\le}\cdots{\le}M_{s_i}^{(i)}<m_{i+1}\end{subarray}}
\prod_{j=1}^{s_i}\frac{1}{M^{(i)}_{j}+\beta}\\
=
&\frac{(\beta)_{m_i+c^{'}_{i}}}{(\beta)_{m_{i+1}}}
\sum_{\begin{subarray}{c}m_i{<_{c^{'}_i}}M_1^{(i)}{\le}\cdots{\le}M_{s_i}^{(i)}<m_{i+1}\end{subarray}}
\prod_{j=1}^{s_i}\frac{1}{M^{(i)}_{j}+\beta}\\
=
&\frac{(\beta)_{m_i+c^{'}_{i}}}{(\beta)_{m_{i+1}}}
\sum_{l_i=0}^{s_i}\sum_{\begin{subarray}{c}\sum_{j=1}^{l_i}y_j^{(i)}=s_i-l_i\\
y_j^{(i)}\ge0\end{subarray}}
\sum_{\begin{subarray}{c}m_i{<_{c^{'}_i}}M_1^{(i)}<\cdots<M_{l_i}^{(i)}<m_{i+1}\end{subarray}}
\prod_{j=1}^{l_i}\frac{1}{(M^{(i)}_{j}+\beta)^{y_{j}^{(i)}+1}}
\end{aligned}
\end{equation}
for $s_i,m_i,m_{i+1}\ge0$ such that $m_i{<_{c^{'}_i}}m_{i+1}$ ($i=1,\ldots,q-1$), 
where we regard 
$\sum_{\begin{subarray}{c}0=s_i\\y_j^{(i)}\ge0\end{subarray}}$ as 1 if $s_i=0$ and as 0 otherwise. 
(From the definitions of the symbols $c_i$ and $<_{c_i}$, 
the inequality $m_i+c^{'}_{i}{\le}M_1^{(i)}$ of (21) can be rewritten as $m_i{<_{c^{'}_i}}M_1^{(i)}$.) 
Using (13), (20) and (21), we have 
\begin{equation*}
\begin{aligned}
&\frac{(-1)^r}{r!}\frac{\mathrm{d}^r}{\mathrm{d}\beta^r}\left(\frac{(\beta)_{m_1}}{(\beta)_{m_q+1}}\right)\\
=
&\sum_{\begin{subarray}{c}\sum_{i=1}^{q}r_i+\sum_{i=1}^{q-1}s_i=r\\
r_i,s_i\ge0\end{subarray}}
\left(\prod_{i=1}^{q-1}\frac{\binom{r_i+c^{'}_i-1}{r_i}}{(m_i+\beta)^{r_i+c^{'}_{i}}}\right)
\frac{1}{(m_q+\beta)^{r_q+1}}\\
&
\times\prod_{i=1}^{q-1}
\left(\frac{(\beta)_{m_i+c^{'}_{i}}}{(\beta)_{m_{i+1}}}
\sum_{l_i=0}^{s_i}\sum_{\begin{subarray}{c}\sum_{j=1}^{l_i}y_j^{(i)}=s_i-l_i\\
y_j^{(i)}\ge0\end{subarray}}
\sum_{\begin{subarray}{c}m_i{<_{c^{'}_i}}M_1^{(i)}<\cdots<M_{l_i}^{(i)}<m_{i+1}\end{subarray}}
\prod_{j=1}^{l_i}\frac{1}{(M^{(i)}_{j}+\beta)^{y_{j}^{(i)}+1}}\right)\\
=
&\sum_{\begin{subarray}{c}\sum_{i=1}^{q-1}c^{'}_ir_i+r_q\\
+\sum_{i=1}^{q-1}s_i=r\\
c^{'}_{i}r_i, r_q,s_i\ge0\end{subarray}}
\frac{(\beta)_{m_1}}{(\beta)_{m_q}}
\left(\prod_{i=1}^{q-1}\frac{1}{(m_i+\beta)^{c^{'}_{i}r_i}}\right)
\frac{1}{(m_q+\beta)^{r_q+1}}\\
&
\times\prod_{i=1}^{q-1}
\left(\sum_{l_i=0}^{s_i}\sum_{\begin{subarray}{c}\sum_{j=1}^{l_i}y_j^{(i)}=s_i-l_i\\
y_j^{(i)}\ge0\end{subarray}}
\sum_{\begin{subarray}{c}m_i{<_{c^{'}_i}}M_1^{(i)}<\cdots<M_{l_i}^{(i)}<m_{i+1}\end{subarray}}
\prod_{j=1}^{l_i}\frac{1}{(M^{(i)}_{j}+\beta)^{y_{j}^{(i)}+1}}\right)\\
=
&\sum_{l=0}^{r}
\sum_{\begin{subarray}{c}l_1+\cdots+l_{q-1}=l\\
l_i\ge0\end{subarray}}
\sum_{\begin{subarray}{c}\sum_{i=1}^{q-1}c^{'}_ir_i+r_q\\
+\sum_{i=1}^{q-1}\sum_{j=1}^{l_i}y_j^{(i)}=r-l\\
c^{'}_{i}r_i, r_q,y_j^{(i)}\ge0\end{subarray}}
\frac{(\beta)_{m_1}}{(\beta)_{m_q}}
\left(\prod_{i=1}^{q-1}\frac{1}{(m_i+\beta)^{c^{'}_{i}r_i}}\right)
\frac{1}{(m_q+\beta)^{r_q+1}}\\
&\times
\prod_{i=1}^{q-1}\left(\sum_{\begin{subarray}{c}m_i{<_{c^{'}_i}}M_1^{(i)}<\cdots<M_{l_i}^{(i)}<m_{i+1}\end{subarray}}
\prod_{j=1}^{l_i}\frac{1}{(M^{(i)}_{j}+\beta)^{y_{j}^{(i)}+1}}\right)
\end{aligned}
\end{equation*}
for $r\ge0$. Therefore, using this result, we have 
\begin{equation}
\begin{aligned}
&\frac{(-1)^r}{r!}\frac{\partial^r}{\partial\beta^r}\left(\frac{(\beta)_{m_1}}{m_1!}\frac{m_q!}{(\beta)_{m_q+1}}
\left\{\prod_{i=1}^{q-1}\frac{1}{(m_i+\alpha)^{k^{'}_i}}\right\}\frac{1}{(m_q+\alpha)^{k^{'}_q-1}}\right){\Biggl|}_{\beta=\alpha}\\
=
&\sum_{l=0}^{r}
\sum_{\begin{subarray}{c}l_1+\cdots+l_{q-1}=l\\
l_i\ge0\end{subarray}}
\sum_{\begin{subarray}{c}\sum_{i=1}^{q-1}c^{'}_ir_i+r_q\\
+\sum_{i=1}^{q-1}\sum_{j=1}^{l_i}y_j^{(i)}=r-l\\
c^{'}_{i}r_i, r_q,y_j^{(i)}\ge0\end{subarray}}
\sum_{\begin{subarray}{c}m_1{<_{c^{'}_1}}M_1^{(1)}<\cdots<M_{l_1}^{(1)}<m_2\\
\vdots\\
m_{q-1}{<_{c^{'}_{q-1}}}M_1^{(q-1)}<\cdots<M_{l_{q-1}}^{(q-1)}<m_q
\end{subarray}}
\frac{(\alpha)_{m_1}}{m_1!}\frac{m_q!}{(\alpha)_{m_q}}\\
&\times
\left\{\prod_{i=1}^{q-1}\frac{1}{(m_i+\alpha)^{k^{'}_i+c^{'}_{i}r_i}}
\left(\prod_{j=1}^{l_i}\frac{1}{(M^{(i)}_{j}+\alpha)^{y_{j}^{(i)}+1}}\right)\right\}
\frac{1}{(m_q+\alpha)^{k^{'}_q+r_q}}
\end{aligned}
\end{equation}
for $r\ge0$ and $m_1,\ldots,m_q\in\mathbb{Z}$ such that 
$0{\le}m_1{<_{c^{'}_1}}\cdots{<_{c^{'}_{q-1}}}m_q$. 
Differentiating the right-hand side of (9) $r$ times with respect to $\beta$ at $\beta=\alpha$ ($\alpha\in\mathbb{C}$ with $\mathrm{Re}(\alpha)>0$) and using (22), 
we obtain the right-hand side of (17). This completes the proof of (17).
\end{proof}
\begin{proposition}
Theorem $1.1$ $(ii)$ and Proposition $2.4$ are equivalent.
\end{proposition}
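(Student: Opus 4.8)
The plan is to show that the two identities (7) and (17) are term-by-term rearrangements of each other, by identifying the monomials $\sigma^{\varepsilon}_{r-l}(v_{\mathbf{y}})$ appearing on the left of (17) with the monomials $\sigma^{\varepsilon}_{r}(v)$ appearing in (7), and similarly for the right-hand sides with $\sigma^{\varepsilon}_{r}\tau(v)$. First I would unwind the definition of $\sigma^{\varepsilon}_{s}$ applied to $v_{\mathbf{y}}$ as given by (15): a monomial $z_{c_{i-1}}(k_i+\mathbf{y}_{k_i-\varepsilon(c_i)}^{(i)})$ has its $i$-th exponent shifted by $\mathbf{y}_{k_i-\varepsilon(c_i)}^{(i)}$, and then $\sigma^{\varepsilon}_{r-l}$ shifts it further by $\varepsilon(c_i)r_i$ (for $i<p$) or by $r_p$ (for $i=p$), subject to $\sum_{i<p}\varepsilon(c_i)r_i+r_p=r-l$. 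Summing over all $\mathbf{y}$ with $\sum_{i<p}\mathbf{y}_{k_i-\varepsilon(c_i)}^{(i)}+\mathbf{y}^{(p)}_{k_p-2}=l$ and then over $l$ from $0$ to $r$, the combined shift of exponent $k_i$ runs over all nonnegative integers whose total is $r$, with the constraint that for $c_i\neq 1$ the shift coming from the $\sigma^{\varepsilon}_{r-l}$-part is forced to be $0$. A short bookkeeping argument — essentially a reindexing of the nested sums, replacing the pair $(\mathbf{y}_{k_i-\varepsilon(c_i)}^{(i)},\varepsilon(c_i)r_i)$ by its sum — shows that the left side of (17) equals $\binom{\ast}{\ast}$-weighted $Z(\sigma^{\varepsilon}_r(v);\alpha)$; one checks the binomial coefficients collapse exactly as in (13) and the constraint $k_i-\varepsilon(c_i)$ on the number of $y_j^{(i)}$'s is what encodes the weight $\binom{k_i+r_i-1}{r_i}$ in the definition of $\sigma^{\varepsilon}_r$. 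Hence the left side of (17) is precisely $Z(\sigma^{\varepsilon}_r(v);\alpha)$.

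Next I would do the analogous computation for the right-hand side of (17). Here the relevant monomial is $v^{'}_{(\{l_i\}_{i=1}^{q-1})}$ from (16), built from $\tau(v)=\prod_{i=1}^q z_{c^{'}_{i-1}}(k^{'}_i)$ by inserting $l_{i-1}$ copies of $z_1(1)=x_1$ between consecutive blocks. Applying $\sigma^{\varepsilon}_{r-l}$ to this and then summing over $l_1+\cdots+l_{q-1}=l$ and over $l=0,\dots,r$, one finds — again by a reindexing of the same shape, grouping the inserted $x_1$'s together with the $\sigma^{\varepsilon}_{r-l}$-shifts landing on them — that the total collapses to $Z(\sigma^{\varepsilon}_r(\tau(v));\alpha)$. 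The point is that an inserted $z_1(1)$ carries $c=1$, so $\varepsilon(c)=1$ and it can absorb arbitrary shifts from $\sigma^{\varepsilon}$, exactly mirroring the role played by the $\mathbf{y}$-variables on the left-hand side; the bookkeeping is symmetric to the one in the previous paragraph. Therefore the right side of (17) equals $Z(\sigma^{\varepsilon}_r(\tau(v));\alpha)$, and (17) becomes (7). Conversely, reading these same identifications backwards shows (7) implies (17) for every $r\ge 0$, so the two statements are equivalent.

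For the converse direction I would note that since the identification of each side of (17) with the corresponding side of (7) is an equality of $\mathbb{Q}$-linear combinations of values $Z(\cdot;\alpha)$ valid for every $r\ge 0$, assuming (7) immediately yields (17), and assuming (17) for all $r$ yields (7) (in fact (7) is the $l=r$, i.e.\ trivially-reindexed, incarnation of (17) after the collapse). So the equivalence is a formal consequence of the two combinatorial identifications.

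The main obstacle I anticipate is the careful verification that the nested-sum reindexing on each side produces exactly the binomial weights in the definitions of $\sigma^{b,1}_r$ and $\sigma^{\varepsilon}_r$ — in particular tracking the off-by-one in the last factor ($k_p-2$ versus $k_i-\varepsilon(c_i)$, and $\binom{k_p+r_p-2}{r_p}$ versus $\binom{k_i+r_i-1}{r_i}$), and confirming that the constraint "$\varepsilon(c_i)r_i\ge 0$ with the $r_i$ summing correctly" lines up on both sides. This is purely combinatorial and parallels the manipulations already carried out in the proof of Proposition 2.4 and in \cite[Section 2]{i2009}, so no genuinely new idea is needed; it is just a matter of writing the reindexing cleanly.
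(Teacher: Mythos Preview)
Your central claim --- that the left-hand side of (17) collapses, after reindexing, to $Z(\sigma^{\varepsilon}_r(v);\alpha)$ --- is false. Work out a simple case: take $v=z_1(k_1)z_{1/2}(k_2)$, so $c_1=1/2$ and $\varepsilon(c_1)=0$. Then $\sigma^{\varepsilon}_r(v)=z_1(k_1)z_{1/2}(k_2+r)$ is a single monomial, and the first exponent $k_1$ is never shifted. But in $v_{\mathbf{y}}$ the first exponent becomes $k_1+\mathbf{y}^{(1)}_{k_1}$, which \emph{is} shifted whenever $\mathbf{y}^{(1)}_{k_1}>0$; applying $\sigma^{\varepsilon}_{r-l}$ afterwards does not undo this. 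So the left side of (17) contains terms such as $Z(z_1(k_1+1)z_{1/2}(k_2+r-1);\alpha)$ that simply do not occur in $Z(\sigma^{\varepsilon}_r(v);\alpha)$. If you carry out the reindexing carefully --- using that $\mathbf{y}^{(i)}_{k_i-\varepsilon(c_i)}$ is a sum of $k_i-\varepsilon(c_i)$ free nonnegative integers --- you will find that the left side of (17) equals $Z(\sigma^{b,1}_r(v);\alpha)$, the binomial-weighted version, not $Z(\sigma^{\varepsilon}_r(v);\alpha)$. Your parallel claim for the right-hand side fails for a different reason: the monomial $v'_{(\{l_i\})}$ is $\tau(v)$ with extra letters $x_1$ \emph{inserted between} blocks, not with exponents shifted, so it is not of the form $(\tau(v))_{\mathbf{y}'}$ and the ``symmetric'' collapse you sketch does not go through.

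The paper's argument is structurally different. The key observation, which your proposal misses, is the monomial-level duality $\tau(v_{\mathbf{y}})=v'_{(\{l_i\})}$: extra $x_0$'s in $v$ become, under $\tau$, extra $x_1$'s in $\tau(v)$, and this sets up a bijection between the $\mathbf{y}$-indices of total $l$ and the $(l_i)$-indices of total $l$. Granting (7) for every monomial and every index, one applies it to each $v_{\mathbf{y}}$ with index $r-l$ to get $Z(\sigma^{\varepsilon}_{r-l}(v_{\mathbf{y}});\alpha)=Z(\sigma^{\varepsilon}_{r-l}(\tau(v_{\mathbf{y}}));\alpha)=Z(\sigma^{\varepsilon}_{r-l}(v'_{(\{l_i\})});\alpha)$; summing over the bijection and over $l$ yields (17). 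The converse direction is by induction on $r$: the $l=0$ summand of (17) is exactly (7) for $v$ at level $r$, while the $l\ge 1$ summands on the two sides cancel against each other by the inductive hypothesis applied to the monomials $v_{\mathbf{y}}$ at level $r-l<r$. This is the content of the reference to \cite[Proof of Proposition 2.7]{i2009}.
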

\begin{proof}
Let $v\in{B^0}$, and let $v_{\mathbf{y}}$ be the monomial defined by (15). 
From the definition of the dual map $\tau$, it can be seen that the dual 
$\tau(v_{\mathbf{y}})$ takes the form
\begin{equation}
x_{1}x_{-1}^{k^{'}_1-1}\left\{\prod_{i=2}^{q}x_{c^{'}_{i-1}}x_{1}^{l_{i-1}}x_{-1}^{k^{'}_{i}-1}\right\}
=z_{1}(k^{'}_1)\left\{\prod_{i=2}^{q}z_{c^{'}_{i-1}}(1)z_{1}(1)^{l_{i-1}-1}z_{1}(k^{'}_{i})\right\}
\end{equation}
with
\begin{equation*}
l_1+\cdots+l_{q-1}=\sum_{i=1}^{p-1}\mathbf{y}_{k_i-c_{i}}^{(i)}+\mathbf{y}^{(p)}_{k_p-2},
\end{equation*} 
where $l_i\ge0$ ($i=1,\ldots,q-1$) and the parameters $q$, $c^{'}_{i-1}$ and $k^{'}_{i}$ are those of $\tau(v)=\prod_{i=1}^{q}z_{c^{'}_{i-1}}(k^{'}_i)$. The monomial (23) is just $v^{'}_{(\{l_i\}_{i=1}^{q-1})}$, the monomial defined by (16). 
The equivalence can be proved by using these facts on monomials and the same argument as in \cite[Proof of Proposition 2.7]{i2009}.
\end{proof}
We can prove also the following theorem, which shows that the same equivalence as in \cite[Theorem 1.2]{i2009} holds also for the extended multiple series:
\begin{theorem}
The following assertion $(A)$ is equivalent to Theorem $1.1$ $(ii):$
\par
$(A)$ Let $v\in{B^0}$, and let $\tau(v)$ be its dual. Then the identity $(7)$ 
holds for all ``even'' integers $r\ge0$ and $\alpha\in\mathbb{C}$ with $\mathrm{Re}(\alpha)>0$.
\end{theorem}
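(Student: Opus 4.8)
The plan is to establish the equivalence by showing each direction separately, modeled on the argument for \cite[Theorem 1.2]{i2009}. The non-trivial direction is that $(A)$ implies Theorem 1.1 $(ii)$, i.e. that knowing $(7)$ for all \emph{even} $r$ forces it for all $r$; the converse is immediate since even integers are a subset of all non-negative integers. The key observation is that $(7)$ has the shape $Z(\sigma_r^\varepsilon(v);\alpha)=Z(\sigma_r^\varepsilon\tau(v);\alpha)$, and both $\sigma_r^\varepsilon(v)$ and $\sigma_r^\varepsilon\tau(v)$ are homogeneous of weight $\mathrm{wt}(v)+r$. So one should organize the argument by total weight: fix a weight $w$, and consider the finite-dimensional subspace of $V^0$ spanned by monomials of weight $w$.

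First I would show that the family of identities $(7)$, ranging over all $v\in B^0$ and all $r\ge 0$, is equivalent to a single generating-function identity obtained by summing $\sum_{r\ge0}$ with an auxiliary formal (or small complex) parameter, say $\sum_{r\ge 0} Z(\sigma_r^\varepsilon(v);\alpha)\,t^r = \sum_{r\ge 0} Z(\sigma_r^\varepsilon\tau(v);\alpha)\,t^r$. The point of introducing even $r$ is the standard trick: for an identity of the form $f(t)=g(t)$ among power series, knowing $f(t)+f(-t)=g(t)+g(-t)$ for all the relevant $v$ is enough to recover $f(t)=g(t)$, \emph{provided} one also has a symmetry exchanging $v$ and $\tau(v)$ that interacts correctly with $t\mapsto -t$. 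Here that symmetry is exactly $\tau^2=\mathrm{id}$: applying $(A)$ to $\tau(v)$ in place of $v$ gives $Z(\sigma_r^\varepsilon\tau(v);\alpha)=Z(\sigma_r^\varepsilon v;\alpha)$ for even $r$ — the same information. So the even-$r$ case alone does \emph{not} trivially give the odd case; the actual mechanism must come from the structure of $\sigma_r^\varepsilon$.

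The real content, following \cite{i2009}, is that the maps $\sigma_r^\varepsilon$ satisfy a composition/recursion law: one shows that $\sigma_r^\varepsilon$ can be expressed through lower-index $\sigma$'s together with the ``weight-raising'' building blocks, so that the collection $\{\sigma_r^\varepsilon(v) : r \text{ even}\}$, together with the duality-compatible linear relations already available, spans the same subspace of $V^0$ (modulo relations that $Z$ kills) as $\{\sigma_r^\varepsilon(v): r\ge 0\}$. Concretely, I would argue by induction on $r$: assuming $(7)$ holds for all $v$ and all $r'<r$ with $r'$ of the correct parity class, I would express $Z(\sigma_r^\varepsilon(v);\alpha) - Z(\sigma_r^\varepsilon\tau(v);\alpha)$ as a $\mathbb{Q}$-linear combination of terms $Z(\sigma_{r'}^\varepsilon(w);\alpha) - Z(\sigma_{r'}^\varepsilon\tau(w);\alpha)$ with $r'$ even, using a telescoping identity for $\sigma_r^\varepsilon$ analogous to the one in \cite[proof of Theorem 1.2]{i2009}; each such term vanishes by $(A)$, giving the result for $r$. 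The base cases $r=0$ (where $(7)$ is just Lemma 2.2 specialized to $\beta=\alpha$) and the smallest odd $r$ are handled directly.

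The main obstacle I anticipate is verifying the precise combinatorial recursion for $\sigma_r^\varepsilon$ that makes the parity reduction go through — specifically, tracking how the binomial coefficients and the constraint $\sum_{i=1}^{p-1}\varepsilon(c_i)r_i + r_p = r$ in the definition of $\sigma_r^\varepsilon$ behave under the decomposition, since the presence of the $\varepsilon(c_i)$ factors (absent in the pure-MZV case of \cite{i2009}) means some partition variables are forced to zero and the relevant identity among binomials is the one recorded as $(13)$ in the excerpt. Once the analogue of the \cite{i2009} recursion is in place, the equivalence follows by the same bookkeeping; so I would spend the bulk of the proof on that recursion and then simply cite \cite[proof of Theorem 1.2]{i2009} for the formal deduction of $(7)$ from its even-$r$ specialization.
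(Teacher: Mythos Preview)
Your plan to follow \cite[Section 3]{i2009} matches the paper's own proof, which consists of exactly that reference. However, your elaboration of what the argument should look like misplaces the work. You anticipate having to ``verify the precise combinatorial recursion for $\sigma_r^\varepsilon$'' and worry about tracking binomials and the constraints involving $\varepsilon(c_i)$. No such new recursion is needed: the required ``telescoping identity'' is precisely Proposition~2.4 (identity (17)), which is already proved unconditionally in the paper by differentiating the two--parameter duality (9) and setting $\beta=\alpha$. Via the bijection $v_{\mathbf{y}}\leftrightarrow v'_{(\{l_i\})}=\tau(v_{\mathbf{y}})$ recorded in the proof of Proposition~2.5, identity (17) reads
\[
\sum_{l=0}^{r}\ \sum_{|\mathbf{y}|=l} D_{r-l}(v_{\mathbf{y}})\;=\;0,
\qquad
D_s(w):=Z(\sigma_s^\varepsilon(w);\alpha)-Z(\sigma_s^\varepsilon\tau(w);\alpha).
\]
Now induct on odd $r$: the $l=0$ term is $D_r(v)$ alone; the terms with $r-l$ even vanish by assumption $(A)$; the terms with $r-l$ odd and $r-l<r$ vanish by the inductive hypothesis (the base case $D_0=0$ is the specialization $\beta=\alpha$ of Lemma~2.2). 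Hence $D_r(v)=0$ for all $r\ge 0$.

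So your generating-function detour and the search for a separate combinatorial recursion on $\sigma_r^\varepsilon$ are both unnecessary; once you recognise that Proposition~2.4 \emph{is} the recursion, the parity reduction is a short induction exactly as in \cite[Section 3]{i2009}.
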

\begin{proof}
This can be proved in the same way as in \cite[Section 3]{i2009}.
\end{proof}
Applying Theorem 2.6 to the case $v=\prod_{i=1}^{p}z_{1}(k_i)$, 
we have Theorem 1.2 of \cite{i2009}.
\begin{example}
We give some examples of Theorem 1.1:
\par
(i) We put $v_0:=z_1(k_1)\left\{\prod_{i=2}^{p}z_{0}(k_i)\right\}$ 
($p,k_i\ge1$ ($i=1,\ldots,p-1$), $k_p\ge2$). Then we have 
\begin{equation*}
\tau(v_0)=\left\{\prod_{i=p}^{2}x_{1}^{k_i-1}x_{0}\right\}x_{1}^{k_1-1}x_{-1}
=
\left\{\prod_{i=1}^{q-1}
z_{c^{'}_{i-1}}(1)\right\}z_{c^{'}_{q-1}}(2),
\end{equation*}
where $q\ge1$, $c^{'}_0=1$, $c^{'}_i\in\{0,1\}$ ($i=1,\ldots,q-1$). Because $\tau^2(v_0)=v_0$, the dual of $\tau(v_0)$ becomes $v_0$. 
For these monomials, the identity (6) gives the following two different relations among (2), 
which come from the symmetry on $\alpha$ and $\beta$ of (9) (compare (14) with (19)): by taking $v=v_0$ in (6), 
\begin{equation}
\begin{aligned}
&\sum_{\begin{subarray}{c}r_1+\cdots+r_p=r\\
r_i\ge0\end{subarray}}
\left\{\prod_{i=1}^{p-1}\binom{k_{i}+r_{i}-1}{r_{i}}\right\}
\binom{k_{p}+r_p-2}{r_{p}}\\
&{\times}Z\left(z_1(k_1+r_1)\left\{\prod_{i=2}^{p}z_{0}(k_i+r_i)\right\}; 
(\alpha,\beta)\right)\\
=
&\sum_{\begin{subarray}{c}c^{'}_{1}r_1+\sum_{i=2}^{q}r_i=r\\
c^{'}_{1}r_1, r_i\ge0\end{subarray}}
Z^{*}_{(c^{'}_1r_1,\{r_i\}_{i=2}^{q})}
\left(\left\{\prod_{i=1}^{q-1}
z_{c^{'}_{i-1}}(1)\right\}z_{c^{'}_{q-1}}(2);(\beta,\alpha)\right)
\end{aligned}
\end{equation}
and, by taking $v=\tau(v_0)$ in (6), 
\begin{equation}
\begin{aligned}
&\sum_{\begin{subarray}{c}r_1+\cdots+r_q=r\\
r_i\ge0\end{subarray}}
Z\left(\left\{\prod_{i=1}^{q-1}z_{c^{'}_{i-1}}(1+r_i)\right\}z_{c^{'}_{q-1}}(2+r_q);
(\alpha,\beta)\right)\\
=
&\sum_{\begin{subarray}{c}\varepsilon(p)r_1+\sum_{i=2}^{p}r_i=r\\
\varepsilon(p)r_1, r_i\ge0\end{subarray}}
Z^{*}_{(\varepsilon(p)r_1,\{r_i\}_{i=2}^{p})}
\left(z_1(k_1)\left\{\prod_{i=2}^{p}z_{0}(k_i)\right\};
(\beta,\alpha)\right)
\end{aligned}
\end{equation}
for all $r\ge0$, $\alpha,\beta\in\mathbb{C}$ with 
$\mathrm{Re}(\alpha), \mathrm{Re}(\beta)>0$, where $\varepsilon(p)=1$ 
if $p=1$ and $\varepsilon(p)=0$ otherwise. 
If $p=1$, the right-hand side of (25) becomes 
$Z^{*}_{(r)}(z_1(k_1);(\beta,\alpha))=\sum_{m=0}^{\infty}(m+\beta)^{-r-1}(m+\alpha)^{-k_1+1}$; therefore 
the case $p=1$ of (25) is the sum formula proved in 
\cite{ig2007} (see also \cite[Remark 2.4]{i2009} and \cite[\textbf{(R2)}]{i2018}). 
We note that, for the monomials $v_0$ and $\tau(v_0)$, 
the identity (7) gives only the following one relation among (2) with $\alpha=\beta$:
\begin{equation}
\begin{aligned}
&Z\left(z_1(k_1)\left\{\prod_{i=2}^{p-1}z_{0}(k_i)\right\}
z_{0}(k_p+r);\alpha\right)\\
=
&\sum_{\begin{subarray}{c}\sum_{i=1}^{q-1}c^{'}_{i}r_i+r_{q}=r\\
c^{'}_{i}r_i, r_q\ge0\end{subarray}}
Z\left(\left\{\prod_{i=1}^{q-1}z_{c^{'}_{i-1}}(1+c^{'}_ir_i)\right\}z_{c^{'}_{q-1}}(2+r_q);\alpha\right)
\end{aligned}
\end{equation}
for all $r\ge0$, $\alpha\in\mathbb{C}$ with $\mathrm{Re}(\alpha)>0$.
\par
(ii) Since the dual of $\prod_{i=1}^{p}z_{1}(k_i)$ is 
$\prod_{i=1}^{q}z_{1}(k^{'}_i)$, the identity (6) with $v=\prod_{i=1}^{p}z_{1}(k_i)$ 
becomes 
\begin{equation*}
\begin{aligned}
&\sum_{\begin{subarray}{c}r_{1}+\cdots+r_{p}=r\\
r_i\ge0\end{subarray}}
\left\{\prod_{i=1}^{p-1}\binom{k_{i}+r_{i}-1}{r_{i}}\right\}
\binom{k_{p}+r_{p}-2}{r_{p}}
Z\left(\prod_{i=1}^{p}z_{1}(k_i+r_i);(\alpha,\beta)\right)\\
=
&\sum_{\begin{subarray}{c}r_{1}+\cdots+r_q=r\\
r_i\ge0\end{subarray}}
Z^{*}_{(\{r_i\}_{i=1}^{q})}
\left(\prod_{i=1}^{q}z_{1}(k^{'}_i+r_i);(\beta,\alpha)\right)
\end{aligned}
\end{equation*}
for all $r\ge0$, $\alpha,\beta\in\mathbb{C}$ with $\mathrm{Re}(\alpha), \mathrm{Re}(\beta)>0$.
\end{example}
\section{Duality of multiple Hurwitz zeta values}
In the present section, we apply our method used in Section 2 to 
deriving duality relations for multiple Hurwitz zeta values. 
Our result in the present section is also formulated in the same way as in Introduction. 
We define the evaluation map $\zeta=\zeta_{\alpha}:B^0 \rightarrow \mathbb{C}$ 
by $\zeta(1;\alpha)=1$ and 
\begin{equation}
\begin{aligned}
\zeta(z_1(k_1)z_{c_1}(k_2){\cdots}z_{c_{p-1}}(k_p);\alpha)
=\sum_{\begin{subarray}{c}0{\le}m_1<_{c_1}\cdots<_{c_{p-1}}m_p<\infty\end{subarray}}
\prod_{i=1}^{p}\frac{1}{(m_i+\alpha)^{k_i}},
\end{aligned}
\end{equation}
where $p\ge1$ and $\alpha\in\mathbb{C}\setminus\mathbb{Z}_{\le0}$. 
This map can be extended to $\mathbb{Q}$-linear maps onto the whole space $V^0$. 
We call the multiple series (27) the multiple Hurwitz zeta value (MHZV for short). 
In \cite{ig2007} and \cite{i2018}, we studied relations for MHZVs in 
some different ways. Our results were described as relations between MHZVs and the multiples series 
\begin{equation}
\begin{aligned}
\sum_{\begin{subarray}{c}0{\le}m_1<_{c_1}\cdots<_{c_{p-1}}m_p<\infty\end{subarray}}
z^{m_p}\frac{{m_p}!}{(\alpha)_{m_p}}
\left\{\prod_{i=1}^{p}\frac{1}{(m_i+\alpha)^{a_i}(m_i+1)^{b_i}}\right\},
\end{aligned}
\end{equation}
where $z\in\{-1,1\}$; $p\ge1$; $a_{i}, b_{i}\in\mathbb{Z}$ such that $a_{i}+b_{i}\ge1$ ($i=1,\ldots,p-1$), $a_{p}+b_{p}\ge2$; 
$\alpha\in\mathbb{C}$ with $\mathrm{Re}(\alpha)>0$; $(a)_m$ is the Pochhammer symbol. (For related works, see Remark 3.3 below and also 
Note at the end of this section.) 
Our result in the present section, a duality of MHZVs, is also described as such a relation. 
To formulate it, we introduce the evaluation map 
$H^{*}_{(\{r_i\}_{i=1}^{q})}=H^{*}_{(\{r_i\}_{i=1}^{q}), \alpha}:B^0 \rightarrow \mathbb{C}$ defined by $H^{*}_{(\{r_i\}_{i=1}^{q})}(1;\alpha)=1$ 
and 
\begin{equation}
\begin{aligned}
&H^{*}_{(\{r_i\}_{i=1}^{q})}(z_1(k_1)z_{c_1}(k_2){\cdots}z_{c_{q-1}}(k_q);\alpha)\\
=
&\sum_{\begin{subarray}{c}0{\le}M_{1}^{(1)}{\le}\cdots{\le}M_{r_1}^{(1)}<_{1-c_1}m_1\\
\vdots\\
m_{i-1}<_{c_{i-1}}M_{1}^{(i)}{\le}\cdots{\le}M_{r_i}^{(i)}<_{1-c_i}m_i\\
\vdots\\
m_{q-1}<_{c_{q-1}}M_{1}^{(q)}{\le}\cdots{\le}M_{r_q}^{(q)}<_{1-c_q}m_q<\infty
\end{subarray}}
\frac{(m_q+1)!}{(\alpha)_{m_q+1}}
\left\{\prod_{i=1}^{q}\left(\prod_{j=1}^{r_i}\frac{1}{M_{j}^{(i)}+\alpha}\right)
\frac{1}{(m_i+1)^{k_i}}\right\},
\end{aligned}
\end{equation}
where $q\ge1$, $r_i\ge0$ ($i=1,\ldots,q$), $c_q=1$, 
$\alpha\in\mathbb{C}$ with $\mathrm{Re}(\alpha)>0$. 
If $r_i=0$, we regard the inequalities $m_{i-1}<_{c_{i-1}}M_{1}^{(i)}{\le}\cdots{\le}M_{r_i}^{(i)}<_{1-c_i}m_i$ of (29) as $m_{i-1}<_{c_{i-1}}m_i$. 
This map can be extended to $\mathbb{Q}$-linear maps onto the whole space $V^0$. 
We use also the map $\sigma_r^{b,2}:B^0 \rightarrow V^0$ defined by $\sigma_r^{b,2}(1)=1$ and 
\begin{equation*}
\begin{aligned}
&\sigma_r^{b,2}(z_1(k_1)z_{c_1}(k_2){\cdots}z_{c_{p-1}}(k_p))\\
=
&\sum_{\begin{subarray}{c}r_1+\cdots+r_p=r\\
r_i\ge0\end{subarray}}
\left\{\prod_{i=1}^{p}\binom{k_{i}+r_{i}-1}{r_{i}}\right\}
\prod_{i=1}^{p}z_{c_{i-1}}(k_i+r_i),
\end{aligned}
\end{equation*}
where $r\ge0$. This can be extended to a $\mathbb{Q}$-linear map from the whole space $V^0$ to itself. Using the same method as in Section 2, we can prove the following new relation between MHZVs and (28) with $z=1$, 
which yields numerous relations: 
\begin{theorem}
Let $v\in{B^0}$, and let $\tau(v)$ be its dual. Then 
\begin{equation}
\zeta(\sigma_r^{b,2}(v);\alpha)
=
\sum_{\begin{subarray}{c}r_1+\cdots+r_{q}=r\\
r_i\ge0\end{subarray}}
H^{*}_{(\{r_i\}_{i=1}^{q})}(\tau(v);\alpha)
\end{equation}
for all $r\ge0$, $\alpha\in\mathbb{C}$ with $\mathrm{Re}(\alpha)>0$.
\end{theorem}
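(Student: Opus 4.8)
The plan is to mimic exactly the strategy used for Theorem 1.1, replacing the two-parameter duality Lemma 2.2 by a one-parameter duality for the MHZV-type series. First I would establish an iterated integral representation for $\zeta(x_1 x_{e_1}\cdots x_{e_{n-1}}x_0;\alpha)$ analogous to (8): expanding $(1-t)^{-1}$ and $(1-t)^{-\alpha}$ factors and integrating term by term should produce a Beta-type integral over the last variable contributing a factor $m_p!\,\Gamma(\alpha)/\Gamma(m_p+\alpha+1)=(m_p!)/(\alpha)_{m_p+1}$, which is precisely the shape appearing in (28)–(30). Concretely I expect the representation to have an integrand of the form $(1-t_0)^{1-\alpha}t_0^{\,?}\,\omega_1(t_0)\{\prod\omega_{e_i}(t_i)\}\omega_0(t_n)(1-t_n)^{\alpha-1}$, with the remaining weight functions at intermediate points chosen so that the plain $\omega_0,\omega_1,\omega_{1/2}$ expansions generate the $(m_i+1)^{-k_i}$ factors. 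Then the change of variables $t_i = 1-u_{n-i}$ (as in the proof of Lemma 2.2, citing \cite[p.~510]{z}) yields a duality $\zeta(v;\alpha) = \widetilde{H}(\tau(v);\alpha)$ where $\widetilde{H}$ is the evaluation map attached to the reflected integrand; by construction $\widetilde H$ is (up to identifying its defining sum with (30) at $r_i=0$) the map $H^{*}_{(\{0\}_{i=1}^{q})}$, i.e. $\zeta(\tau(v);\alpha)$ expressed in ``Hurwitz'' form against the base point $1$ rather than $\alpha$.

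Granting this duality, the second and main step is the differentiation argument. I would differentiate both sides of the duality $r$ times with respect to the deformation parameter — here, since there is only one parameter, the right move is to introduce an auxiliary second parameter $\beta$ exactly as in the two-parameter case (write the MHZV integrand with $t_0^{\beta-1}$, $(1-t_0)^{1-\alpha}$, $t_n^{1-\beta}$), obtain the $(\alpha,\beta)$-duality of which (31) will be the ``$\alpha=\beta$'' specialization, differentiate $r$ times in $\beta$, and then set $\beta=\alpha$. On the left this reproduces the combinatorial spread $\sigma_r^{b,2}$: differentiating $\prod_i (m_i+\beta)^{-k_i}$ (all exponents $k_i$, with no ``$-1$'' shift on the last slot because in (28) the last variable carries $(m_p+1)^{b_p}$ with $b_p$ unconstrained below the others — this is why $\sigma_r^{b,2}$ has $\binom{k_p+r_p-1}{r_p}$ rather than $\binom{k_p+r_p-2}{r_p}$) distributes $r$ derivatives as $\sum_{r_1+\cdots+r_q=r}\prod\binom{k_i+r_i-1}{r_i}(m_i+\alpha)^{-k_i-r_i}$, which is $\zeta(\sigma_r^{b,2}(v);\alpha)$. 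On the right the factor $(\beta)_{m_1}/(\beta)_{m_q+1}$ must be differentiated; here I would reuse the telescoping identity (10) and the derivative formula (11) verbatim (with $\alpha$ and $\beta$ interchanged relative to Section 2, and with all $\varepsilon(c'_i)$ effectively absorbed because the Hurwitz base point is $1$, not a deformable parameter), so that $r$ derivatives of $(\beta)_{m_1}/(\beta)_{m_q+1}$ produce exactly the nested $M^{(i)}_j$-sums with weights $(M^{(i)}_j+\alpha)^{-1}$ appearing in (30), summed over $r_1+\cdots+r_q=r$. Assembling these gives the right-hand side of (31).

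The step I expect to be the main obstacle is getting the iterated integral representation for $\zeta(\cdot;\alpha)$ \emph{and its $(\alpha,\beta)$-deformation} precisely right — in particular matching the shifts ``$m_i+1$'' versus ``$m_i+\alpha$'' in (28)–(30) to the correct placement of the $t^{\beta-1}$, $(1-t)^{1-\alpha}$, $t^{1-\beta}$, $(1-t)^{\alpha-1}$ boundary factors so that, after the reflection $t_i\mapsto 1-u_{n-i}$, the $\alpha$'s land inside the nested $M^{(i)}_j$-sums (producing $(M^{(i)}_j+\alpha)^{-1}$) while the $(m_i+1)$'s come from plain geometric-type expansions $(1-t)^{-1}=\sum t^m$. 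Once the correct deformed integrand is pinned down, everything else is the same bookkeeping as in Section 2: the binomial identity (13) handles the collapse of $\binom{r_i+\varepsilon(c'_i)-1}{r_i}$, and term-by-term integration plus Fubini (legitimate in the region $\mathrm{Re}(\alpha),\mathrm{Re}(\beta)>0$ by the convergence estimates of \cite[Lemma 2.1]{i2009}) justifies all interchanges. I would therefore write the proof as: (1) state and prove the iterated integral representation for the $(\alpha,\beta)$-deformed MHZV; (2) deduce the $(\alpha,\beta)$-duality by reflection; (3) differentiate $r$ times in $\beta$ using (10)–(13), specialize $\beta=\alpha$, and read off (31).
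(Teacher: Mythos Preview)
Your overall plan---iterated-integral representation, reflection $t_i\mapsto 1-u_{n-i}$, then differentiate $r$ times---is the paper's strategy, but you take an unnecessary detour and mis-identify one structural piece along the way. The paper does \emph{not} introduce any auxiliary second parameter. The integral representation it uses is simply
\[
\zeta(v;\alpha)=\idotsint\limits_{0<t_0<\cdots<t_n<1} t_0^{\alpha-1}\,\omega_1(t_0)\Bigl\{\prod_{i=1}^{n-1}\omega_{e_i}(t_i)\Bigr\}\omega_0(t_n)\,dt_0\cdots dt_n,
\]
with a single boundary weight $t_0^{\alpha-1}$ and nothing extra at $t_n$. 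After reflection this weight becomes $(1-u_n)^{\alpha-1}$, and the last Beta integral produces $(m_q+1)!/(\alpha)_{m_q+1}$ while every other factor on the dual side is a plain $(m_i+1)^{-k'_i}$, independent of $\alpha$. That is already the duality (32), and the paper now differentiates it $r$ times \emph{in $\alpha$ itself}---no $\beta$, no specialization $\beta=\alpha$. On the left the only $\alpha$-dependence is $\prod_i(m_i+\alpha)^{-k_i}$, which yields $\sigma_r^{b,2}$ (your observation about the missing ``$-1$'' on the last exponent is correct). On the right the only $\alpha$-dependence is $1/(\alpha)_{m_q+1}$, \emph{not} a ratio $(\beta)_{m_1}/(\beta)_{m_q+1}$ as in Section~2; the relevant telescoping is therefore (10) divided through by the numerator, i.e.
\[
\frac{1}{(\alpha)_{m_q+1}}=\frac{1}{(\alpha)_{m_1+\varepsilon(c'_1)}}\prod_{i=2}^{q}\frac{(\alpha)_{m_{i-1}+\varepsilon(c'_{i-1})}}{(\alpha)_{m_i+\varepsilon(c'_i)}},
\]
and differentiating the new leading factor $1/(\alpha)_{m_1+\varepsilon(c'_1)}$ produces an additional block of indices $0\le M_1^{(1)}\le\cdots\le M_{r_1}^{(1)}<^*_{c'_1}m_1$---precisely the ``$0=m_0\le\cdots$'' chain in the definition (29) of $H^*$, which your sketch (reusing (10)--(11) verbatim) does not account for. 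So your anticipated ``main obstacle'' of pinning down an $(\alpha,\beta)$-deformation evaporates: the asymmetry between $m_i+\alpha$ on one side and $m_i+1$ on the other falls out automatically from the fact that $\alpha$ sits at only one end of the simplex, and the whole argument runs with one parameter.
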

\begin{proof}
Let $v=\prod_{i=1}^{p}z_{c_{i-1}}(k_i)\in{B^0}$, and let 
$\tau(v)=\prod_{i=1}^{q}z_{c^{'}_{i-1}}(k^{'}_i)$ be its dual. Using the same way as in the proof of Lemma 2.1 with $\alpha=1$, we have the following 
iterated integral representation of (27):
\begin{equation}
\begin{aligned}
&\zeta(x_{1}x_{e_1}{\cdots}x_{e_{n-1}}x_{-1};\alpha)\\
=
&\idotsint\displaylimits_{\begin{subarray}{c}
0<t_0<\cdots<t_n<1
\end{subarray}}
t_0^{\alpha-1}\omega_1(t_0)
\left\{\prod_{i=1}^{n-1}\omega_{e_i}(t_i)\right\}
\omega_{-1}(t_n)\mathrm{d}t_{0}\cdots\mathrm{d}t_{n}
\end{aligned}
\end{equation}
for $\alpha\in\mathbb{C}$ with $\mathrm{Re}(\alpha)>0$, 
where $n\ge1$ and $e_i\in\{-1,0,1\}$ $(i=1,\ldots,n-1)$. 
Further, making the change of variables $t_i=1-u_{n-i}$ 
($i=0,1,\ldots,n$) to the above iterated integral, we have the duality formula
\begin{equation}
\begin{aligned}
\zeta(v;\alpha)
=
&\sum_{\begin{subarray}{c}0{\le}m_1<_{c^{'}_1}\cdots<_{c^{'}_{q-1}}m_q<\infty\end{subarray}}
\frac{(m_q+1)!}{(\alpha)_{m_q+1}}
\left\{\prod_{i=1}^{q}\frac{1}{(m_i+1)^{k^{'}_i}}\right\}\\
=
&H^{*}_{(\{0\}_{i=1}^q)}(\tau(v);\alpha)
\end{aligned}
\end{equation}
for $\alpha\in\mathbb{C}$ with $\mathrm{Re}(\alpha)>0$. 
The left-hand side of (30) can be obtained by differentiating that of (32) $r$ times. 
The right-hand side of (30) can also be obtained in the same way as in 
the proof of Theorem 1.1 (i). Indeed, dividing both sides of (10) by $(\beta)_{m_1}$, we have the expression 
\begin{equation*}
\frac{1}{(\alpha)_{m_q+1}}
=
\frac{1}{(\alpha)_{m_1+c^{'}_1}}\left(\prod_{i=2}^{q}\frac{(\alpha)_{m_{i-1}+c^{'}_{i-1}}}
{(\alpha)_{m_i+c^{'}_i}}\right),
\end{equation*}
where $m_1,\ldots,m_q\in\mathbb{Z}$ such that $0{\le}m_1{<_{c^{'}_1}}\cdots{<_{c^{'}_{q-1}}}m_q$ and $c^{'}_q=1$. 
Using this and a calculus similar to in the proof of (12), we have 
\begin{equation}
\begin{aligned}
&\frac{(-1)^r}{r!}\frac{\mathrm{d}^r}{\mathrm{d}\alpha^r}\left(\frac{(m_q+1)!}{(\alpha)_{m_q+1}}
\left\{\prod_{i=1}^{q}\frac{1}{(m_i+1)^{k^{'}_i}}\right\}\right)\\
=
&\sum_{\begin{subarray}{c}r_1+\cdots+r_q=r\\
r_i\ge0\end{subarray}}
\left(\frac{(m_q+1)!}{(\alpha)_{m_1+c^{'}_1}}
\sum_{\begin{subarray}{c}0{\le}M_1^{(1)}{\le}\cdots{\le}M^{(1)}_{r_{1}}<m_{1}+c^{'}_1\end{subarray}}
\prod_{j=1}^{r_1}\frac{1}{M^{(1)}_{j}+\alpha}\right)\\
&\times
\left(\prod_{i=2}^{q}\frac{(\alpha)_{m_{i-1}+c^{'}_{i-1}}}{(\alpha)_{m_i+c^{'}_i}}
\sum_{\begin{subarray}{c}m_{i-1}+c^{'}_{i-1}{\le}M_1^{(i)}{\le}\cdots{\le}M^{(i)}_{r_{i}}<m_{i}+c^{'}_i\end{subarray}}
\prod_{j=1}^{r_i}\frac{1}{M^{(i)}_{j}+\alpha}\right)\\
&\times\left\{\prod_{i=1}^{q}\frac{1}{(m_i+1)^{k^{'}_i}}\right\}\\
=
&
\sum_{\begin{subarray}{c}r_1+\cdots+r_q=r\\
r_i\ge0\end{subarray}}
\frac{(m_q+1)!}{(\alpha)_{m_q+1}}
\left(\sum_{\begin{subarray}{c}0{\le}M_1^{(1)}{\le}\cdots{\le}M^{(1)}_{r_{1}}{<_{1-c^{'}_1}}m_1\end{subarray}}
\prod_{j=1}^{r_1}\frac{1}{M^{(1)}_{j}+\alpha}\right)\\
&\times
\left\{\prod_{i=2}^{q}\left(
\sum_{\begin{subarray}{c}m_{i-1}{<_{c^{'}_{i-1}}}M_1^{(i)}{\le}\cdots{\le}M^{(i)}_{r_{i}}{<_{1-c^{'}_i}}m_i\end{subarray}}
\prod_{j=1}^{r_i}\frac{1}{M^{(i)}_{j}+\alpha}\right)\right\}\left\{\prod_{i=1}^{q}\frac{1}{(m_i+1)^{k^{'}_i}}\right\}\\
=
&
\sum_{\begin{subarray}{c}r_1+\cdots+r_q=r\\
r_i\ge0\end{subarray}}
\sum_{\begin{subarray}{c}0{\le}M_{1}^{(1)}{\le}\cdots{\le}M_{r_1}^{(1)}{<_{1-c^{'}_1}}m_1\\
\vdots\\
m_{i-1}{<_{c^{'}_{i-1}}}M_1^{(i)}{\le}\cdots{\le}M^{(i)}_{r_{i}}{<_{1-c^{'}_i}}m_i\\
\vdots\\
m_{q-1}{<_{c^{'}_{q-1}}}M_{1}^{(q)}{\le}\cdots{\le}M_{r_q}^{(q)}{<_{1-c^{'}_q}}m_q
\end{subarray}}
\frac{(m_q+1)!}{(\alpha)_{m_q+1}}
\left\{\prod_{i=1}^{q}\left(\prod_{j=1}^{r_i}\frac{1}{M_{j}^{(i)}+\alpha}\right)
\frac{1}{(m_i+1)^{k^{'}_i}}\right\}
\end{aligned}
\end{equation}
for $r\ge0$, $m_1,\ldots,m_q\in\mathbb{Z}$ such that $0{\le}m_1{<_{c^{'}_1}}\cdots{<_{c^{'}_q-1}}m_q$ 
and $c^{'}_q=1$. Therefore, differentiating the right-hand side of (32) $r$ times and using (33), we obtain the right-hand side of (30). This completes the proof.
\end{proof}
\begin{example}
For the monomials $v_0$ and $\tau(v_0)$ used in Example 2.7 (i), the identity (30) also gives the following two different relations 
between MHZVs and (29), which are similar to (24) and (25): 
by taking $v=v_0$ in (30), 
\begin{equation*}
\begin{aligned}
&\sum_{\begin{subarray}{c}r_1+\cdots+r_p=r\\
r_i\ge0\end{subarray}}
\left\{\prod_{i=1}^{p}\binom{k_{i}+r_{i}-1}{r_{i}}\right\}
\zeta\left(z_1(k_1+r_1)\left\{\prod_{i=2}^{p}z_{0}(k_i+r_i)\right\}; 
\alpha\right)\\
=
&\sum_{\begin{subarray}{c}r_1+\cdots+r_q=r\\
r_i\ge0\end{subarray}}
H^{*}_{(\{r_i\}_{i=1}^{q})}
\left(\left\{\prod_{i=1}^{q-1}
z_{c^{'}_{i-1}}(1)\right\}z_{c^{'}_{q-1}}(2);\alpha\right)
\end{aligned}
\end{equation*}
and, by taking $v=\tau(v_0)$ in (30), 
\begin{equation*}
\begin{aligned}
&\sum_{\begin{subarray}{c}r_1+\cdots+r_q=r\\
r_i\ge0\end{subarray}}
(1+r_q)\zeta\left(\left\{\prod_{i=1}^{q-1}z_{c^{'}_{i-1}}(1+r_i)\right\}z_{c^{'}_{q-1}}(2+r_q);\alpha\right)\\
=
&\sum_{\begin{subarray}{c}r_1+\cdots+r_p=r\\
r_i\ge0\end{subarray}}
H^{*}_{(\{r_i\}_{i=1}^{p})}
\left(z_1(k_1)\left\{\prod_{i=2}^{p}z_{0}(k_i)\right\};\alpha\right)
\end{aligned}
\end{equation*}
for all $r\ge0$, $\alpha\in\mathbb{C}$ with $\mathrm{Re}(\alpha)>0$.
\end{example}
\begin{remark}
Coppo \cite{c}, Coppo and Candelpergher \cite{cc}, \'{E}mery \cite{e}, Hasse \cite{h} proved relations between the case $c_i=0$ 
(or $c_i=1$) ($i=1,\ldots,p-1$) of (28) and the single Hurwitz(--Lerch) zeta values 
$\zeta(z_1(k_1);\alpha)$, $\sum_{m=0}^{\infty}z^{m}(m+\alpha)^{-k}$. 
We proved in \cite{ig2007} relations between the above case of (28) 
with $z=1$ and the case $c_i=1$ ($i=1,\ldots,p-1$) of (27): 
see \cite[Proposition 2 and its examples]{ig2007} and \cite[\textbf{(R3)}]{i2018}. 
See also \cite[Note 2]{i2020}. 
\end{remark}
\begin{note} 
(i) The present paper is a revised version of preprints of mine which were 
distributed in October 2015. Revised versions of the preprints were submitted to 
many journals in June 2016--2021. For instance, one was submitted to 
a journal on October 30, 2017 and rejected on December 24, 2018. 
Another was submitted to a journal on March 9, 2019 and rejected 
on August 13, 2020. See also \cite[Note 2 (iii)--(v)]{i2020}. 
\par 
(ii) I give some notes related to the present research. 
In \cite{i2018}, I studied relations among two-, three- and four-parameter extensions of the multiple series (2), (27) and (28) by using the hypergeometric identities of Andrews \cite[Theorem 4]{an}, Krattenthaler and Rivoal \cite[Proposition 1]{kr}. This study of mine shows the potentiality of these extensions. The paper \cite{i2018} is a revised version of my manuscript submitted to the Nagoya Mathematical Journal on March 9, 2015. 
(A preprint of the manuscript was distributed in February 2015. 
I had already written most of the contents of \cite{i2018} in the manuscript 
and the preprint.) The above multi-parameter extensions were already studied 
in the preprint, the manuscript and other manuscripts of mine written in 2014--2016. 
See also Note 2 of \cite{i2020}. I note also that the case $v=\prod_{i=1}^{p}z_{1}(k_i)$ of (30) was proved in the preprint and the manuscript submitted 
to Nagoya Math. J. in the same way as in the proof of (30): 
the case $v=\prod_{i=1}^{p}z_{1}(k_i)$ of the proof of (30) is just the proof written therein. 
One of the most important ingredients of the proof of (30) is the duality formula (32). 
As regards (32), I stated the case $v=\prod_{i=1}^{p}z_{1}(k_i)$ of it in my talk at Seminar on Analytic Number Theory, Graduate School of Mathematics, Nagoya University, Japan, 
February 13, 2008. (See also \cite[Acknowledgments on p.~578]{i2009} 
and \cite[Note on pp.~21--22]{i202206}. The following people were parts of the audience of my talk: Kohji Matsumoto, Yoshio Tanigawa, Takashi Nakamura, Yoshitaka Sasaki.) Therefore the identities (30), (32) and the proof of (30) are extensions of my previous works on 
MHZVs. In my talk, I pointed out also a similarity between the multiple series on 
the right-hand side of the case $v=\prod_{i=1}^{p}z_{1}(k_i)$ of (32) and the Newton series studied 
by Kawashima in \cite{kaw2009}. Indeed, as explained in my talk, the Newton series 
has the factor $(\alpha)_{m_q}/{m_q}!$ in its summand and the multiple series on 
the right-hand side has the inverse ${m_q}!/(\alpha)_{m_q}$. 
I think that it is interesting to study this similarity further. 
From my previous works \cite{i2009} and \cite{i2011}, 
it can be seen that the identities 
\begin{equation*}
\begin{aligned}
&\frac{(-1)^r}{r!}\frac{\mathrm{d}^r}{\mathrm{d}\alpha^r}
\left(\sum_{\begin{subarray}{c}0{\le}m_1<_{c_1}\cdots<_{c_{p-1}}m_p<\infty\end{subarray}}
\frac{(m_p+1)!}{(\alpha)_{m_p+1}}
\left\{\prod_{i=1}^{p}\frac{1}{(m_i+1)^{k_i}}\right\}\right)\\
=
&\sum_{m_p=0}^{\infty}\frac{(m_p+1)!}{(\alpha)_{m_p+1}}\\
&\times\left(\sum_{\begin{subarray}{c}0{\le}l_1{\le}\cdots{\le}l_r{\le}m_p\end{subarray}}
\prod_{i=1}^{r}\frac{1}{l_i+\alpha}\right)
\left(\sum_{\begin{subarray}{c}0{\le}m_1<_{c_1}\cdots<_{c_{p-2}}
m_{p-1}<_{c_{p-1}}m_p\end{subarray}}
\prod_{i=1}^{p}\frac{1}{(m_i+1)^{k_i}}\right)\\
=
&\textrm{ a $\mathbb{Z}$-linear combination of (28) with $z=1$}
\end{aligned}
\end{equation*}
hold for all $r\ge0$, $\alpha\in\mathbb{C}$ with $\mathrm{Re}(\alpha)>0$. 
I note that, using my identities (34), one can get an explicit expression of the above 
$\mathbb{Z}$-linear combination without calculating the product of finite multiple harmonic sums. 
\par 
(iii) In \cite[Remark 7 (i)]{i2018}, I gave a new proof of Hoffman's identity 
$\zeta(\{1\}^k, l+2)=\zeta(\{1\}^l, k+2)$ ($0{\le}k,l\in\mathbb{Z}$; 
\cite[Theorem 4.4]{ho}), which is the duality formula for $\zeta(\{1\}^k,l+2)$. 
Here $\zeta(\{k_i\}_{i=1}^n):=\sum_{\begin{subarray}{c}0<m_1<\cdots<m_n<\infty\end{subarray}}m_1^{-k_1}{\cdots}m_n^{-k_n}$ 
(i.e., MZV) and $\{1\}^n:=\{1\}_{i=1}^n$. My proof is based on the hypergeometric identities \cite[Theorem 4]{an} and \cite[Proposition 1 (i)]{kr}; 
therefore it can be regarded as a hypergeometric proof of the duality formula. 
It is interesting to generalize my proof to a proof of the duality formula 
for all MZVs in appropriate ways. 
\end{note}
\begin{corrections2} 
(i) Page 223, line 7: ``the idea" should be ``our idea". 
(ii) Page 223, line 15 from the bottom and page 234, line 12: ``former" should be ``previous". (iii) Page 235, line 7: ``sort" should be ``kind". 
\end{corrections2}
\begin{corrections1} 
(i) Page 575, lines 2--3 from the bottom: ``what we noted" should be 
``a note on". (ii) Page 578, line 23 from the bottom: ``March 2007" should be 
``February 3, 2007". 
\end{corrections1}
\begin{flushleft}
Nagoya, Japan\\
\textit{E-mail address}: masahiro.igarashi2018@gmail.com
\end{flushleft}
\end{document}